\newcommand{\be}{\begin{equation}}
\newcommand{\ee}{\end{equation}}
\newcommand{\beq}{\begin{eqnarray}}
\newcommand{\eeq}{\end{eqnarray}}
\newtheorem{thm}{Theorem}[section]
\newtheorem{lma}{Lemma}[section]
\newtheorem{cor}{Corollary}[section]
\theoremstyle{remark}
\newtheorem{rem}{Remark}[section]
\numberwithin{equation}{section}
\def\p{\partial}
\def\p{\partial}
\def\Ric{\text{\rm Ric}}
\def\Pi{\overline{\displaystyle{\mathbb{II}}}}
\def\a{\alpha}
\def\b{\beta}
\def\K{K\"ahler }
\def\Ric{\text{Ric}}
\def\a{\alpha}
\def\p{\partial}
\def\Ric{\operatorname{Ric}}
\def\Rm{\operatorname{Rm}}
\def\K{K\"ahler\ }
\def\be{\begin{equation}}
\def\ee{\end{equation}}
\def\b{\bar}
\def\cd{\nabla}
\def\bee
\def\eee{\end{equation*}}
\def\bee{\begin{equation*}}
\def\eee{\end{equation*}}
\def\a{{\alpha}}
\def\b{{\beta}}
\def\p{\partial}
\def\K{K\"ahler }
\def\KRF{K\"ahler-Ricci flow }
\def\be{\begin{equation}}
\def\ee{\end{equation}}
\def\a{{\alpha}}
\def\b{{\beta}}
\def\Ric{\text{\rm Ric}}
\def\Rm{\text{\rm Rm}}
\def\p{\partial}
\def\p{\partial}
\def\p{\partial}
\def\KRF{K\"ahler-Ricci flow }
\def\cd{\nabla}
\def\bee{\begin{equation*}}
\def\eee{\end{equation*}}
\def\a{{\alpha}}
\def\b{{\beta}}
\def\p{\partial}
\def\K{K\"ahler }
\def\KRF{K\"ahler-Ricci flow }
\def\be{\begin{equation}}
\def\ee{\end{equation}}
\def\a{{\alpha}}
\def\b{{\beta}}
\def\Ric{\text{\rm Ric}}
\def\Rm{\text{\rm Rm}}
\def\p{\partial}
\def\p{\partial}
\def\p{\partial}
\def\KRF{K\"ahler-Ricci flow }
\def\bee{\begin{equation*}}
\def\eee{\end{equation*}}
\def\a{{\alpha}}
\def\b{{\beta}}
\def\p{\partial}
\def\K{K\"ahler }
\def\KRF{K\"ahler-Ricci flow }
\def\be{\begin{equation}}
\def\ee{\end{equation}}
\def\a{{\alpha}}
\def\b{{\beta}}
\def\Ric{\text{\rm Ric}}
\def\Rm{\text{\rm Rm}}
\def\p{\partial}
\def\p{\partial}
\def\p{\partial}
\def\KRF{K\"ahler-Ricci flow }
\def\cd{\nabla}
\def\bee{\begin{equation*}}
\def\eee{\end{equation*}}
\def\a{{\alpha}}
\def\b{{\beta}}
\def\p{\partial}
\def\K{K\"ahler }
\def\KRF{K\"ahler-Ricci flow }
\def\be{\begin{equation}}
\def\ee{\end{equation}}
\def\a{{\alpha}}
\def\b{{\beta}}
\def\Ric{\text{\rm Ric}}
\def\Rm{\text{\rm Rm}}
\def\p{\partial}
\def\p{\partial}
\def\p{\partial}
\def\KRF{K\"ahler-Ricci flow }
\def\ve{\varepsilon}
\def\inj{\textbf{Inj}}
\def\b{\beta}
\begin{document}
\title{ A note on existence of exhaustion functions and its applications}

\author{Shaochuang Huang}

\address{Yau Mathematical Sciences Center, Tsinghua University, Beijing, China.} \email{schuang@mail.tsinghua.edu.cn}

\thanks{Research partially supported by China Postdoctoral Science Foundation (The Tenth Special Grant)}

\begin{abstract} In this note, we prove an existence result on exhaustion functions by adapting the method in \cite{Tam10}. Then we apply it to prove short-time existence of Ricci flow and study Yau's uniformization conjecture using similar method in \cite{He16} and \cite{Lee-Tam17}. 

 \noindent{\it Keywords}:  Exhaustion function, short-time existence of Ricci flow, harmonic radius estimate, uniformization conjecture

\end{abstract}

\maketitle
\markboth{Shaochuang Huang} {A note on existence of exhaustion functions and its applications}
\section{Introduction}

In this note, we prove an existence result of exhaustion functions using harmonic radius estimate, which adapts the method in \cite{Tam10}. Exhaustion functions are useful to study complete non-compact manifolds. In particular, one can use them to construct Ricci flows on such manifolds. See for example \cite{He16} and \cite{Lee-Tam17}. One classical result on existence of exhaustion functions is the following \cite{S-Y}:
\begin{thm}[Schoen-Yau] Let $M$ be a complete Riemannian manifold with $\Ric\geq-k$ for some $k\geq 0$, then there exists a smooth function $f$ defined on $M$ such that: \bee
|\cd f|<C,\hspace{.3cm} f\geq Cd(x),\hspace{.3cm} |\Delta f|<C. \eee Here $C$ is a constant depending on $M$ and $d(x)$ is the distance function with some fixed point.

\end{thm}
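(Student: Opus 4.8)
The plan is to smooth the distance function $\rho(x) = d(x,x_0)$. This $\rho$ is already a proper function with $|\nabla\rho| = 1$ almost everywhere, so it is the natural exhaustion candidate; its defect is that it is merely Lipschitz and its Laplacian is controlled only from one side. Indeed, the hypothesis $\Ric \geq -k$ yields, via the Laplacian comparison theorem, a barrier-sense upper bound $\Delta\rho \leq C_1(1 + \rho^{-1})$, which is bounded for $\rho$ bounded away from $0$ but provides no lower bound; along the cut locus the distributional Laplacian of $\rho$ acquires a negative singular part. The entire difficulty is therefore to regularize $\rho$ into a genuinely smooth $f$ while gaining a \emph{two-sided} bound on $\Delta f$, all the while keeping $|\nabla f|$ and the linear growth $f \asymp \rho$ under control.

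First I would build a good cover. From $\Ric \geq -k$, Bishop--Gromov volume comparison shows that any maximal $1$-separated set $\{x_i\}$ produces a locally finite cover $\{B(x_i,2)\}$ whose multiplicity is bounded by a constant $N = N(n,k)$ independent of the point. Subordinate to this cover I would choose a partition of unity $\{\phi_i\}$, with $0 \leq \phi_i \leq 1$, $\sum_i \phi_i \equiv 1$, $\operatorname{supp}\phi_i \subset B(x_i,2)$, and --- this is the key requirement --- uniform bounds $|\nabla\phi_i| \leq C$ and $|\Delta\phi_i| \leq C$ with $C = C(n,k)$. Then I would set
\[
f(x) = \sum_i \rho(x_i)\,\phi_i(x),
\]
which is smooth and locally a finite sum.

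The estimates then follow from the normalization $\sum_i \phi_i \equiv 1$, which gives $\sum_i \nabla\phi_i = 0$ and $\sum_i \Delta\phi_i = 0$ pointwise. Hence I may subtract $\rho(x)$ freely and write
\[
\nabla f = \sum_i \bigl(\rho(x_i) - \rho(x)\bigr)\nabla\phi_i, \qquad \Delta f = \sum_i \bigl(\rho(x_i) - \rho(x)\bigr)\Delta\phi_i .
\]
On $\operatorname{supp}\phi_i$ the triangle inequality gives $|\rho(x_i) - \rho(x)| \leq 2$, and at each point at most $N$ terms are nonzero, so $|\nabla f| \leq 2NC$ and $|\Delta f| \leq 2NC$. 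The same inequality bounds $|f - \rho| = |\sum_i (\rho(x_i)-\rho(x))\phi_i| \leq 2$, so $f$ is comparable to $\rho$; in particular $f \geq \rho - 2 \geq C^{-1}d(x)$ off a compact set, which gives the exhaustion property after adjusting constants.

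The main obstacle is precisely the construction of the cutoffs $\phi_i$ with a two-sided bound on $\Delta\phi_i$ from a Ricci lower bound alone. The naive choice $\phi_i = \chi(d(\cdot,x_i))$ fails: its Laplacian is $\chi''\,|\nabla d|^2 + \chi'\,\Delta d$, and since $\Delta d$ can tend to $-\infty$ along the cut locus, only one-sided control is available. I expect this to be the crux, to be overcome either by Calabi's trick together with a barrier/mollification argument that tames the singular negative part of $\Delta d$, or by solving an auxiliary elliptic problem to produce Cheeger--Colding type cutoffs with $|\Delta\phi_i| \leq C$, or --- in the spirit of the present note --- by passing to harmonic coordinates on each ball, where the harmonic radius estimate furnishes smooth cutoffs with uniformly controlled Laplacian.
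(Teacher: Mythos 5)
This statement is quoted in the paper as a classical result with a citation to Schoen--Yau's lecture notes; the paper contains no proof of it, so there is nothing internal to compare your argument against, and I will judge it on its own terms.

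Your reduction is clean and correct as far as it goes: given a maximal $1$-separated net $\{x_i\}$ (bounded multiplicity from Bishop--Gromov) and a partition of unity $\{\phi_i\}$ with $|\nabla\phi_i|+|\Delta\phi_i|\le C(n,k)$, the telescoping identities $\sum_i\nabla\phi_i=0$, $\sum_i\Delta\phi_i=0$ do yield all the stated bounds for $f=\sum_i\rho(x_i)\phi_i$. But the step you defer --- producing cutoffs with a \emph{two-sided} Laplacian bound from $\Ric\ge -k$ alone --- is not a technical loose end; it is the entire content of the theorem. A cutoff on $B(x_i,2)$ with $|\Delta\phi_i|\le C$ is itself a regularized distance-type function with two-sided Laplacian control, i.e.\ precisely the object whose existence is being asserted, so the argument as written is circular in difficulty: you have reduced the theorem to an equally hard (essentially equivalent) statement and then listed candidate strategies without executing any of them. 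Moreover, one of the three escape routes you name is not available under the hypotheses: harmonic-coordinate cutoffs via a harmonic radius lower bound require the extra assumptions of this paper (a two-sided Ricci bound together with an almost-Euclidean volume bound, as in Anderson's estimate), not merely $\Ric\ge -k$. The Cheeger--Colding cutoff construction (solving a Poisson problem on an annulus and composing with a suitable convex function, plus the Cheng--Yau gradient estimate) does work under a Ricci lower bound alone and would close the gap, but it is a substantive theorem that must actually be proved or properly invoked, not merely mentioned as a possibility. Until that step is supplied, the proof is incomplete at its crux.

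One smaller point: if you build the $\phi_i$ by normalizing bump functions $\psi_i$ as $\phi_i=\psi_i/\sum_j\psi_j$, the bound on $\Delta\phi_i$ additionally uses $\sum_j\psi_j\ge c>0$ and the bounded multiplicity to control the cross terms $\langle\nabla\psi_i,\nabla\sum_j\psi_j\rangle$ and $|\nabla\sum_j\psi_j|^2$; this is routine but should be said, since the uniform positive lower bound on $\sum_j\psi_j$ is exactly what forces you to take the $\psi_i$ equal to $1$ on the covering balls $B(x_i,1)$ rather than merely positive.
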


In some applications to study complete non-compact manifold, one wants to obtain a smooth exhaustion function with Hessian bound. In \cite{Shi}, W.-X. Shi proves the following:
\begin{thm}[W.-X. Shi]\label{Tam} Let $(M, g)$ be an $n$-dimensional complete Riemannian manifold with sectional curvature bounded by $K$ i.e. $|Sect(g)|\leq K$ for some $K\geq0$. Then there exists a smooth function $f$ defined on $M$ such that: \bee
d(x)+1\leq f(x)\leq d(x)+C, \eee $|\cd f|(x)\leq C$ and $|\cd\cd f|(x)\leq C$ for all $x\in M$. Here $C$ is a constant depending on $n, K$ and $d(x)$ is the distance function with some fixed point. 
\end{thm}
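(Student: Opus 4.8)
The plan is to produce $f$ by smoothing the distance function $\rho(x):=d(x,x_0)$ in controlled local coordinates, keeping the regularization scale \emph{fixed} (rather than letting it tend to zero), so that the failure of $\rho$ to be smooth along the cut locus contributes only a bounded amount to the Hessian. The starting point is comparison geometry from $|Sect(g)|\le K$: away from $x_0$ the function $\rho$ is semiconcave, and off the cut locus Hessian comparison gives, for $\rho(x)\ge 1$, a two-sided bound $-C(n,K)\,g\le \nabla^2\rho\le C(n,K)\,g$. The upper bound $\nabla^2\rho\le \sqrt{K}\coth(\sqrt{K}\rho)\,g\le C(n,K)\,g$ in fact holds globally in the barrier sense, so that as a matrix-valued measure the distributional Hessian of $\rho$ satisfies $\nabla^2\rho\le C(n,K)\,g$, its singular part being a nonpositive measure supported on the cut locus. (Near $x_0$ the comparison blows up like $1/\rho$, but $\{\rho\le 2\}$ is a fixed compact core on which any smooth extension suffices, absorbed into $C$.) Thus the only quantity not controlled pointwise is the negative singular part of $\nabla^2\rho$, and taming it is the crux.

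The key analytic estimate is a bound on the mass of this singular part on small balls. Laplacian comparison gives $\Delta\rho\le C(n,K)$ in the barrier sense, so the positive part obeys $\int_{B_r}(\Delta\rho)_+\le C r^n$; meanwhile the divergence structure gives $\int_{B_r}\Delta\rho=\int_{\partial B_r}\partial_\nu\rho$, and $|\nabla\rho|\le 1$ together with volume comparison bound this by $C r^{n-1}$. Subtracting, $\int_{B_r}(\Delta\rho)_-\le C(n,K)\,r^{n-1}$, hence $\int_{B_r}|\Delta\rho|\le C(n,K)\,r^{n-1}$ for $r\le 1$. Now, by the harmonic radius estimate one fixes a uniform scale $r_0=r_0(n,K)>0$ and harmonic charts in which $\tfrac12\delta_{ij}\le g_{ij}\le 2\delta_{ij}$ and $\|g_{ij}\|_{C^{1,\alpha}}\le C(n,K)$, so $|\Gamma|\le C(n,K)$. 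Mollifying $\rho$ in such a chart at the fixed scale $\delta_0\sim r_0$ produces a smooth $\rho_{\delta_0}$ with $|\rho_{\delta_0}-\rho|\le C r_0\le C$ and $|\nabla\rho_{\delta_0}|\le C$. Convolution against the probability mollifier $\phi_{\delta_0}$ preserves the upper bound $\nabla^2\rho_{\delta_0}\le C(n,K)\,g$, while the trace is tamed by the mass estimate, $|\Delta\rho_{\delta_0}|\le \|\phi_{\delta_0}\|_\infty\int_{B_{\delta_0}}|\Delta\rho|\le C\delta_0^{-n}\cdot\delta_0^{\,n-1}=C(n,K)$. An upper bound on every eigenvalue together with an absolute bound on the trace forces a lower bound on every eigenvalue, so $|\nabla^2\rho_{\delta_0}|\le C(n,K)$ after passing from the coordinate Hessian to the covariant one using $|\Gamma|\le C$ and $|\nabla\rho_{\delta_0}|\le C$.

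It remains to globalize. Choose a maximal $\tfrac{r_0}{2}$-separated net $\{x_i\}$; by volume comparison the cover $\{B(x_i,r_0)\}$ has bounded multiplicity $N=N(n,K)$, and one takes a subordinate partition of unity $\{\psi_i\}$ with $|\nabla\psi_i|\le C/r_0$ and $|\nabla^2\psi_i|\le C/r_0^2$. Set $f:=1+\sum_i\psi_i\,\rho^{(i)}_{\delta_0}$, where $\rho^{(i)}_{\delta_0}$ is the local mollification on the chart about $x_i$. Since each $\rho^{(i)}_{\delta_0}=\rho+O(r_0)$, the bounds $d(x)+1\le f(x)\le d(x)+C$ and $|\nabla f|\le C$ are immediate. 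For the Hessian, expand $\nabla^2 f=\sum_i\big(\psi_i\nabla^2\rho^{(i)}_{\delta_0}+2\,d\psi_i\odot d\rho^{(i)}_{\delta_0}+\rho^{(i)}_{\delta_0}\,\nabla^2\psi_i\big)$; the dangerous last group is rewritten using $\sum_i\nabla^2\psi_i=0$ as $\sum_i(\rho^{(i)}_{\delta_0}-\rho^{(i_0)}_{\delta_0})\,\nabla^2\psi_i$ on each fixed chart, where the differences are $O(r_0)$, so that group is bounded by $N\cdot(C/r_0^2)\cdot C r_0=C(n,K)$. The remaining groups are bounded by $C(n,K)$ as well, giving $|\nabla^2 f|\le C$.

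The main obstacle is precisely the Hessian lower bound across the cut locus: the distributional $\nabla^2\rho$ is bounded above but not below, so a naive mollification with scale tending to zero would blow up. The resolution is to keep the scale fixed at $\delta_0\sim r_0(n,K)$ and absorb the negative singular part through the mass estimate $\int_{B_r}|\Delta\rho|\le C r^{n-1}$, which renders its mollification only $O(1/\delta_0)=O(1)$. A secondary technical point is the existence of controlled charts when $(M,g)$ collapses; here one invokes the harmonic radius estimate on the local universal covers — equivalently, in the exponential charts, where the absence of conjugate points within distance $\pi/\sqrt{K}$ restores a lower bound on the injectivity radius — performs the smoothing there, and descends, all the estimates above being local and isometry-invariant.
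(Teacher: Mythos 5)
First, a point of orientation: the paper does not actually prove this statement — it is quoted from Shi's thesis, and the paper only recalls that Tam reproved it by running the heat equation with the distance (or Schoen--Yau) function as initial data and deriving the Hessian bound from interior parabolic estimates in exponential/harmonic charts. Your core analytic mechanism is genuinely different and, in itself, correct and rather elegant: the upper barrier bound $\nabla^2\rho\le C(n,K)g$, the mass estimate $\int_{B_r}|\Delta\rho|\le C r^{n-1}$ obtained by combining the pointwise bound on $(\Delta\rho)_+$ with the divergence identity, and the linear-algebra step (upper bound on every eigenvalue plus a bound on the trace forces a two-sided bound) together do control a fixed-scale mollification of $\rho$. (Two minor misstatements: the claimed two-sided pointwise Hessian comparison off the cut locus is false near conjugate points — on the round sphere $\nabla^2\rho\to-\infty$ at the antipode — though your argument never actually uses it; and "any smooth extension on the compact core $\{\rho\le 2\}$" produces constants depending on $M$ rather than on $(n,K)$, which needs a uniform fix such as smoothing $\sqrt{\rho^2+4}$ instead of $\rho$.)

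The genuine gap is in the globalization, precisely at the point you dismiss in one clause. Under $|Sect(g)|\le K$ alone the injectivity radius, and hence the harmonic radius, has no lower bound (collapse: thin flat tori already have harmonic radius tending to $0$), so there are no uniform genuine charts on $M$ at scale $r_0(n,K)$. Your fallback is to mollify upstairs in the exponential "chart" $\exp_{x_i}:B(0,r_0)\to M$ and "descend"; but $\exp_{x_i}$ is only an immersion, and Euclidean convolution in $T_{x_i}M$ does not commute with the local pseudo-group of deck transformations, which are isometries of the pulled-back metric but not Euclidean isometries of $T_{x_i}M$ (except in the flat case). Consequently $\rho^{(i)}_{\delta_0}$ is multivalued on $M$: its value at $p$ depends on which preimage of $p$ you choose, so the gluing $f=1+\sum_i\psi_i\rho^{(i)}_{\delta_0}$ is not defined. (The partition of unity $\{\psi_i\}$ with $|\nabla^2\psi_i|\le C/r_0^2$ at the fixed scale $r_0$ has the same problem and is essentially equivalent to the theorem being proved.) This is exactly the difficulty the Shi/Tam heat-equation route is designed to avoid: the heat semigroup is an intrinsic, globally defined mollifier, and the exponential or harmonic charts enter only to prove local a priori estimates for a function that already exists on $M$, never to construct it. To salvage your approach you would need either a standing non-collapsing hypothesis, or an intrinsic (isometry-equivariant) mollifier — metric ball averages, Greene--Wu Riemannian convolution, or the heat kernel — for which your measure-theoretic Hessian estimate would then have to be re-derived.
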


Later, L.-F. Tam \cite{Tam10} reproves the above result using heat equation together with harmonic coordinate. We adapt Tam's method to obtain the main result of this note: 

\begin{thm}\label{Ex} $\forall n\in\mathbb{N}$, there exist $\ve=\ve(n)>0$ and $\Lambda'(n)>0$ depending only on $n$ such that if $(M, g)$ is a smooth complete Riemannian $n$-manifold with $-\Lambda'\leq\Ric(g)\leq\Lambda$ and $V_gB_g(x, 1)\geq(1-\ve)\omega_n$ for all $x\in M$ and for some $\Lambda\geq 0$, then there exists a smooth function $\gamma: M\to\mathbb{R}$ satisfying \bee
d(x)+1\leq \gamma(x)\leq  d(x)+C \eee and \bee
|\cd \gamma|\leq C, |\cd\cd \gamma|\leq C. \eee Here $d(x)$ is the distance function with respect to $g$ from a fixed point $O\in M$ and $C$ is a constant depending only $n$ and $\Lambda$.

\end{thm}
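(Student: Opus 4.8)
The plan is to obtain $\gamma$ by smoothing the distance function $d$ with the heat semigroup and then adding a constant. Since $\Ric(g)\ge-\Lambda'$, the manifold is stochastically complete and its heat kernel $H(x,y,t)$ enjoys Gaussian upper bounds; as $d$ grows at most linearly, $u(x,t):=\int_M H(x,y,t)\,d(y)\,d\mathrm{vol}_g(y)$ is well defined, is smooth for $t>0$, and solves $\p_t u=\Delta u$ with $u(\cdot,0)=d$. I would fix a single time $t_0=t_0(n,\Lambda)$ (chosen comparable to the square of the harmonic radius below) and set $\gamma:=u(\cdot,t_0)+C_0$ for a suitable constant $C_0=C_0(n,\Lambda)$. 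The entire statement then reduces to uniform $C^0$, $C^1$, and $C^2$ bounds on $u(\cdot,t_0)$.

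For the $C^0$ estimate, write $u(x,t)-d(x)=\int_M H(x,y,t)\,\big(d(y)-d(x)\big)\,d\mathrm{vol}_g(y)$ and use $|d(y)-d(x)|\le d(x,y)$ together with the first-moment bound $\int_M H(x,y,t)\,d(x,y)\,d\mathrm{vol}_g(y)\le C(n,\Lambda',t)$ coming from the Gaussian estimate; this gives $|u(\cdot,t_0)-d|\le C$, and taking $C_0=C+1$ forces $d(x)+1\le\gamma(x)\le d(x)+C'$. For the $C^1$ estimate I would use that $d$ is $1$-Lipschitz, so by the Bakry-\'Emery gradient estimate valid under $\Ric\ge-\Lambda'$ one has $|\cd u(\cdot,t)|\le e^{\Lambda' t}$, whence $|\cd\gamma|\le C$.

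The crux is the Hessian bound, and this is exactly where the hypotheses $-\Lambda'\le\Ric\le\Lambda$ and $V_gB_g(x,1)\ge(1-\ve)\omega_n$ enter, through a harmonic radius estimate of Anderson type: choosing $\ve=\ve(n)$ small and $\Lambda'=\Lambda'(n)$, at every $p\in M$ there is a harmonic coordinate chart on $B_g(p,r_0)$ with $r_0=r_0(n,\Lambda)>0$ in which the components $g_{ij}$ are controlled in $C^{1,\alpha}$ (equivalently $W^{2,q}$). In such coordinates $\Delta=g^{ij}\p_i\p_j$ carries no first-order terms, and $a^{ij}:=g^{ij}\in C^{\alpha}$ with uniformly bounded norm. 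I would then localize $u$ to the backward parabolic cylinder $B_g(p,r_0)\times[t_0-r_0^2,t_0]$, subtract the value of $u$ at the center point (so that, by the gradient bound just proved, $u$ has oscillation at most $Cr_0$ there), and invoke interior parabolic Schauder estimates for the operator $\p_t-a^{ij}\p_i\p_j$ to conclude $\|u(\cdot,t_0)\|_{C^{2,\alpha}(B_g(p,r_0/2))}\le C$. Passing from coordinate second derivatives to the covariant Hessian via $\cd_i\cd_j u=\p_i\p_j u-\Gamma^k_{ij}\p_k u$, and bounding $\Gamma^k_{ij}$ by the $C^{1,\alpha}$ control and $\p_k u$ by the gradient bound, gives $|\cd\cd\gamma|=|\cd\cd u|\le C$.

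The main obstacle is precisely this last step. The $C^0$ and $C^1$ bounds follow from standard heat-kernel and gradient estimates requiring only a lower Ricci bound, but the Hessian bound genuinely needs the harmonic radius to be bounded below: without the volume non-collapsing hypothesis and the upper Ricci bound the coefficients $a^{ij}$ could lose their uniform $C^{\alpha}$ control and the Schauder constants would degenerate. This is the point at which Tam's use of harmonic coordinates \cite{Tam10} replaces the sectional curvature bound of \cite{Shi}. Finally, choosing $t_0\sim r_0^2$ ensures that every constant depends only on $n$ and $\Lambda$, which completes the construction.
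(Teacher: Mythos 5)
Your proposal follows essentially the same route as the paper: smooth the distance function by the heat equation, obtain the $C^0$ and gradient bounds from the lower Ricci bound alone (as in Tam's argument), and derive the Hessian bound from interior parabolic estimates in harmonic coordinate charts whose size is controlled by the Anderson-type harmonic radius estimate (the paper's Theorem \ref{C1a}) under the two-sided Ricci bound and the almost Euclidean volume hypothesis. The paper's proof is only a sketch deferring to \cite{Tam10}, and your write-up correctly supplies the same ingredients in more detail.
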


In previous literature, it seems that if one wants to obtain an exhaustion function with Hessian bound, it needs to assume the bound of sectional curvature. We observe that this is not necessary.  The above theorem tells us one only needs to assume the bound of Ricci curvature and almost Euclidean volume condition. The proof of Theorem \ref{Ex} adapts the proof of Theorem \ref{Tam} by Tam \cite{Tam10}, uses harmonic radius estimate in \cite{Anderson90} and  observes that it is not necessary to use exponential map which is used in \cite{Tam10} .

Once we obtain Theorem \ref{Ex}, by the method in \cite{He16} and \cite{Lee-Tam17}, we can obtain a short-time existence result of Ricci flow:

\begin{thm}\label{main} Given any $n\in\mathbb{N}$, $\alpha>0$, $\Lambda\geq0$ and $r_0>0$ there exist $T(n,\alpha,\Lambda,r_0)>0$ depending only on $n$, $\alpha$, $\Lambda$ and $r_0$ and $\ve(n,\alpha)>0$ depending only on $n$, $\alpha$ such that if $(M, g)$ is a smooth complete Riemannian $n$-manifold with $|\Ric(g)|\leq\Lambda$ and $V_g B_g(x,r)\geq(1-\ve)\omega_nr^n$ for all $0<r\leq r_0$ and for all $x\in M$, then there exists a smooth complete Ricci flow $g(t)$ on $M\times[0, T]$ with $g(0)=g$ and \bee
\sup\limits_{x\in M} |\Rm|(x,t)\leq \frac{\alpha}{t} \eee for all $t\in(0, T]$.

\end{thm}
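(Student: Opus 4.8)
The plan is to prove Theorem \ref{main} by the now-standard local-to-global gluing technique for constructing Ricci flows out of a sequence of exhaustion-cutoff approximations, following the strategy of \cite{He16} and \cite{Lee-Tam17}, with Theorem \ref{Ex} supplying the exhaustion function that replaces the sectional-curvature hypothesis used in earlier treatments. The starting observation is that the volume-ratio hypothesis $V_gB_g(x,r)\geq(1-\ve)\omega_n r^n$ for all $0<r\le r_0$, together with $|\Ric(g)|\le\Lambda$, is precisely the scale-invariant almost-Euclidean condition that (after rescaling $g\mapsto r_0^{-2}g$ to normalize the radius to $1$) lets us invoke Theorem \ref{Ex}: we get a smooth proper function $\gamma$ with $d(x)+1\le\gamma(x)\le d(x)+C$ and $|\nabla\gamma|\le C$, $|\nabla\nabla\gamma|\le C$, all constants depending only on $n$ and $\Lambda$. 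The role of $\gamma$ is to produce exhaustion domains $\Omega_j=\{\gamma<j\}$ with uniformly controlled boundary geometry, on which one can solve the Ricci--DeTurck flow (or Ricci flow with suitable boundary conditions) with uniform time of existence.

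First I would set up the local existence on each exhaustion piece. On a large ball or sublevel set $\Omega_j$, solve a boundary value problem for the Ricci--DeTurck flow (fixing the metric appropriately on $\partial\Omega_j$, or damping it there using $\gamma$ as a barrier) to obtain a smooth solution $g_j(t)$ on $\Omega_j\times[0,T_j]$ with $g_j(0)=g$. The key point, as in \cite{He16}, is to run a \emph{pseudolocality-type} a priori estimate: the almost-Euclidean volume condition and the two-sided Ricci bound give, via Perelman-type pseudolocality (or its local maximal-function/curvature-decay refinements), a uniform curvature estimate of the form $\sup_{\Omega_j}|\Rm|(x,t)\le\alpha/t$ on a time interval $[0,T]$ with $T=T(n,\alpha,\Lambda,r_0)>0$ independent of $j$. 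Crucially, the constant $\ve(n,\alpha)$ is chosen small enough (depending only on $n,\alpha$) that pseudolocality applies with the prescribed coefficient $\alpha$ in the bound $\alpha/t$; this is where the almost-Euclidean smallness is consumed.

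Next I would extract the global flow by a diagonal/compactness argument. With the uniform estimate $|\Rm|(\cdot,t)\le\alpha/t$ in hand on every $\Omega_j\times(0,T]$, Shi-type interior derivative estimates give uniform bounds on all derivatives of curvature on compact subsets bounded away from $t=0$. Passing $j\to\infty$ and using the exhaustion property $\Omega_j\uparrow M$ (guaranteed by the lower bound $\gamma\ge d+1$, which makes the sublevel sets relatively compact and increasing), I would take a subsequential smooth limit on compact sets to obtain a smooth Ricci flow $g(t)$ on $M\times(0,T]$ satisfying $\sup_M|\Rm|(\cdot,t)\le\alpha/t$. Completeness of $g(t)$ for each $t>0$ follows from the curvature bound together with distance-distortion estimates (the bound $|\Rm|\le\alpha/t$ controls how fast distances can shrink, so completeness is preserved), and continuity up to $t=0$ with $g(0)=g$ is recovered from the uniform estimates and the initial condition $g_j(0)=g$.

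The main obstacle I expect is the uniform-in-$j$ lower bound on the existence time together with the precise curvature bound $\alpha/t$. Solving the flow on each $\Omega_j$ is routine, but controlling the curvature uniformly up to the boundary and showing that the existence time $T$ does not degenerate as $\Omega_j$ exhausts $M$ is the delicate part: this requires a pseudolocality estimate that is genuinely local (so that boundary effects on $\partial\Omega_j$ do not propagate inward within time $T$) and that is quantitatively compatible with the almost-Euclidean hypothesis. The exhaustion function $\gamma$ from Theorem \ref{Ex} is exactly what makes the boundary geometry of $\Omega_j$ uniformly controlled, so that a cutoff/barrier argument confines the influence of the artificial boundary and lets the interior estimate survive the limit. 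Getting the smallness threshold $\ve(n,\alpha)$ to depend only on $n$ and $\alpha$ (and not on $\Lambda$ or $r_0$), as claimed, is the accounting that requires the most care.
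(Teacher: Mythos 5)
Your overall architecture (exhaustion domains, approximate flows with a uniform existence time coming from pseudolocality, then a compactness/limit argument with Shi-type interior estimates and distance-distortion for completeness) matches the paper's, and the limit-extraction step is essentially what the paper does via the Chen--Simon local estimates and Perelman's distance distortion. But there is a genuine gap at the step where you produce the approximating flows. You propose to solve an initial-boundary value problem for the Ricci--DeTurck flow on each $\Omega_j$, ``fixing the metric on $\partial\Omega_j$ or damping it there using $\gamma$ as a barrier.'' This is not developed, and it is the hard part: there is no off-the-shelf well-posedness and a priori estimate theory for Ricci flow boundary value problems that you can invoke here, and even granting existence you would still have to verify the pseudolocality hypotheses at points near $\partial\Omega_j$, where boundary effects contaminate the estimate. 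The paper avoids boundary value problems entirely. Its key device (Lemma \ref{conf}) is a conformal modification $h=e^{2f}g$ on $U_\rho=\gamma^{-1}([0,\rho))$, with $f$ a function of $\gamma/\rho$ that blows up logarithmically as $\gamma\to\rho$, so that $(U_\rho,h)$ becomes a \emph{complete} manifold with bounded curvature to which Shi's classical short-time existence theorem applies. The bounds $|\nabla\gamma|,|\nabla\nabla\gamma|\le C$ from Theorem \ref{Ex} are exactly what make the curvature of $h$ bounded and its Ricci curvature bounded below.

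The second, related omission is the accounting that makes the existence time uniform in $\rho$. The pseudolocality theorem actually used is Tian--Wang's (Theorem \ref{pseu}), whose hypotheses are $\Ric\ge-\ve$ and $V B(x,1)\ge(1-\ve)\omega_n$ at \emph{every} point, including points in the collar $U_\rho\setminus U_{\rho(1-\kappa)}$ where the metric has been changed. Lemma \ref{conf}(ii),(iii) are designed precisely to show the conformal modification preserves (after rescaling) a Ricci lower bound and an almost-Euclidean volume ratio $(1-2\ve)\omega_n r^n$ up to a definite scale $r_1(n,\Lambda,r_0)$; this is what lets Lemma \ref{maximal time} give $T\ge T(n,\alpha,\Lambda,r_0)$ independent of $\rho$ together with $|\Rm(h_\rho(t))|\le\alpha/t$ everywhere on $U_\rho$. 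Without an analogue of this verification on the modified region, your uniform-in-$j$ curvature bound and lower bound on the existence time do not follow. So the missing idea is concrete: replace the boundary value problem by the conformal completion of Lemma \ref{conf}, and prove that this completion preserves the two pseudolocality hypotheses.
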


\begin{rem}  Our motivation for the above theorem is to remove the assumption of sectional curvature lower bound in \cite{Lee-Tam17}, but as a compensation, we have to assume the upper bound of Ricci curvature. Under the assumption of Ricci curvature as in Theorem \ref{Ex}, by \cite{Anderson90}, almost Euclidean isoperimetric inequality is equivalent to almost Euclidean volume condition. Therefore, our result is weaker than the result in \cite{He16}. The main difference from \cite{He16} is that we consider another pseudolocality theorem which is proved by Tian-Wang \cite{Tian-Wang15} instead of the original Perelman's one \cite{Perelman} which is used in \cite{He16}.

\end{rem}

We can also apply Theorem \ref{main} to study Yau's uniformization conjecture as in \cite{He16} and \cite{Lee-Tam17}. We can obtain the following:

\begin{cor}\label{unif}  Given any $n\in\mathbb{N}$, $\Lambda\geq0$ and $r_0>0$ there exists $\ve(n)>0$ depending only on $n$ such that if $(M, g)$ is a smooth complete non-compact \K $n$-manifold with nonnegative bisectional curvature, $\Ric(g)\leq\Lambda$ and maximum volume growth such that $V_g B_g(x,r_0)\geq(1-\ve)\omega_nr_0^{2n}$  for all $x\in M$, then $M$ is biholomorphic to $\mathbf{C}^n$.
\end{cor}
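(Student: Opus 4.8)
The plan is to apply Theorem \ref{main} to produce a Ricci flow $g(t)$ on $M\times[0,T]$ with the curvature decay $|\Rm|(x,t)\le \alpha/t$, and then run the now-standard K\"ahler--Ricci flow uniformization argument of \cite{He16} and \cite{Lee-Tam17}. First I would verify that the hypotheses of Corollary \ref{unif} feed into Theorem \ref{main}: the bound $\Ric(g)\le\Lambda$ together with nonnegative bisectional curvature gives a two-sided bound $|\Ric(g)|\le\Lambda$ (since nonnegative bisectional curvature forces $\Ric\ge 0$), and the maximum volume growth hypothesis $V_gB_g(x,r_0)\ge(1-\ve)\omega_n r_0^{2n}$, via the Bishop--Gromov monotonicity under $\Ric\ge 0$, upgrades to $V_gB_g(x,r)\ge(1-\ve)\omega_n r^{2n}$ for \emph{all} $0<r\le r_0$ and all $x$. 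Here one must be mindful that $M$ is a complex $n$-manifold, hence real dimension $2n$, so Theorem \ref{main} is invoked with real dimension $2n$ and the volume exponent $2n$; I would fix $\alpha>0$ to be any convenient small constant and choose $\ve(n)$ no larger than the $\ve(2n,\alpha)$ supplied by Theorem \ref{main}.

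Next I would establish that the flow preserves the K\"ahler structure and the curvature positivity. Since $g(0)=g$ is K\"ahler, the Ricci flow is in fact the K\"ahler--Ricci flow and $g(t)$ stays K\"ahler for $t\in(0,T]$; moreover nonnegativity of bisectional curvature is preserved along the K\"ahler--Ricci flow by the maximum principle of Bando and Mok, which applies here because the curvature bound $|\Rm|(\cdot,t)\le\alpha/t$ gives the requisite control to justify the maximum principle on the complete noncompact manifold. In tandem, the maximum volume growth is preserved: under $\Ric\ge 0$ along the flow the asymptotic volume ratio is monotone, and one shows it stays bounded below by a positive constant, so each $(M,g(t))$ has maximum volume growth with a uniform lower bound on the volume ratio.

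The heart of the argument is then the Liouville-type / Cheng--Yau gradient estimate input. With the sharp curvature decay $|\Rm|(x,t)\le\alpha/t$ and $\alpha$ chosen small, the flow $(M,g(t))$ for small $t>0$ has curvature as close to flat as one wishes at unit scale, and the solution exists on a definite time interval. Following \cite{Lee-Tam17}, one uses the flow to regularize $g$ and then invokes the theorem of Chau--Tam (building on Mok, Ni, and others) that a complete noncompact K\"ahler manifold with nonnegative (and suitably controlled) bisectional curvature and maximum volume growth is biholomorphic to $\mathbb{C}^n$; the flow-produced curvature decay supplies exactly the hypotheses (in particular the scalar curvature decay and the maximal volume growth preserved along the flow) needed to construct the requisite holomorphic functions of polynomial growth via the heat flow / plurisubharmonic exhaustion, and to show they give an embedding onto $\mathbb{C}^n$.

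The main obstacle I anticipate is the passage from the existence of the short-time flow to the uniformization conclusion while keeping all estimates uniform and scale-invariant. Concretely, the delicate points are: (i) justifying the Bando--Mok maximum principle and the preservation of maximum volume growth on a complete noncompact manifold with only the $\alpha/t$ curvature bound rather than a uniform two-sided bound, which typically requires a cutoff/exhaustion argument (and here the exhaustion function of Theorem \ref{Ex} is a natural tool); and (ii) arranging that $\ve(n)$ can be chosen \emph{independent} of $\Lambda$ and $r_0$ in the final statement, which forces a rescaling normalization so that the almost-Euclidean volume condition enters in the correct scale-invariant form before Theorem \ref{main} is applied. Once these uniformity issues are handled, the conclusion that $M$ is biholomorphic to $\mathbb{C}^n$ follows by citing the established K\"ahler--Ricci flow uniformization machinery.
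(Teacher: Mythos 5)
Your proposal follows essentially the same route as the paper: invoke Theorem \ref{main} (noting that nonnegative bisectional curvature gives $\Ric\geq 0$, hence $|\Ric|\leq\Lambda$, and Bishop--Gromov propagates the volume bound to all scales $r\leq r_0$), then argue that the flow remains K\"ahler with nonnegative bisectional curvature, that maximum volume growth is preserved, and conclude via Chau--Tam. The paper handles these last three steps purely by citation (to \cite{Huang-Tam}/\cite{Lee-Tam17}, to \cite{He16} or \cite{Simon}, and to \cite{Chau-Tam} respectively), so your more detailed sketch is consistent with, and no weaker than, the paper's own argument.
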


\begin{rem} The above result is not new and needs more assumptions. Actually, Gang Liu \cite{Liu} and Lee-Tam \cite{Lee-Tam172} independently prove that any complete non-compact \K manifold with complex dimension $n$, nonnegative bisectional curvature and maximum volume growth must be biholomorphic to $\mathbf{C}^n$. The argument here seems much simpler. 
\end{rem}

{\it Acknowledgement:} The author would like to thank Pak-Yeung Chan for some helpful discussions and thank Professor Luen-Fai Tam, Guoyi Xu, Fei He and Man-Chun Lee for their interests in this work and some useful comments.

\section{Harmonic radius estimate}

In this section, we will give a proof of a harmonic radius estimate which appears in Theorem 3.2 and its remark in \cite{Anderson90}. For more details on harmonic coordinate and harmonic radius, one may see \cite{DeTurck-Kazdan}, \cite{Anderson90} and \cite{Hebey-Herzlich}.

Now, we will show the following $\mathcal{C}^{1, \a}$-harmonic radius estimate:

\begin{thm}\label{C1a}  $\forall n\in\mathbb{N}$, there exist $\ve=\ve(n)>0, \Lambda'=\Lambda'(n)>0$ and $\delta=\delta(n)>0$ such that if $(M, g)$ is a smooth complete Riemannian $n$-manifold with $-\Lambda'\leq\Ric(g)\leq\Lambda$ and $V_gB_g(x_0, 1)\geq(1-\ve)\omega_n$ for some $x_0\in M$ and some $\Lambda\geq 0$, then $\forall Q>1, \a\in(0,1)$, there exists a constant $C=C(n, \Lambda, Q, \a)>0$ such that for all $x\in B_g(x_0, \delta)$ \bee
r_H(Q, 1, \a)(x)\geq C. \eee Here $r_H(Q, k, \a)(x)$ denotes the $\mathcal{C}^{k, \a}$-harmonic radius at $x$ i.e. the largest number such that on $B_x(r_H)$, there exists a harmonic coordinate such that \bee
Q^{-1}\delta_{ij}\leq g_{ij}\leq Q\delta_{ij} \eee and \bee 
\sum\limits_{1\leq |\b|\leq k} r_H^{\b}\sup\limits_x|\p^\b g_{ij}(x)|+\sum\limits_{|\b|=k}r_H^{k+\a}\sup\limits_{y\neq z} \frac{|\p^\b g_{ij}(y)-\p^\b g_{ij}(z)|}{d_g(y,z)^\a}\leq Q-1. \eee 
\end{thm}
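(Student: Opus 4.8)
The plan is to argue by contradiction through a blow-up (rescaling) procedure, reducing everything to the rigidity case of the Bishop--Gromov volume comparison. Fix $n$ together with $Q>1$ and $\a\in(0,1)$, and suppose the assertion fails. Negating the three quantifiers at once produces sequences $\ve_i\to 0$, $\Lambda_i'\to 0$, $\delta_i\to 0$, complete manifolds $(M_i,g_i)$ with basepoints $x_{0,i}$ satisfying $-\Lambda_i'\le\Ric(g_i)\le\Lambda$ and $V_{g_i}B_{g_i}(x_{0,i},1)\ge(1-\ve_i)\omega_n$, and points $x_i\in B_{g_i}(x_{0,i},\delta_i)$ at which the harmonic radius degenerates, $r_i:=r_H(Q,1,\a)(x_i)\to 0$. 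The first step is to transfer the volume hypothesis from $x_{0,i}$ to the nearby point $x_i$: since $d_{g_i}(x_i,x_{0,i})<\delta_i$, the inclusion $B_{g_i}(x_{0,i},1-\delta_i)\subseteq B_{g_i}(x_i,1)$ combined with Bishop--Gromov monotonicity (using $\Ric\ge-\Lambda_i'$) yields $V_{g_i}B_{g_i}(x_i,1)\ge(1-o(1))\omega_n$ as $i\to\infty$.

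Next I would rescale. Set $\tilde g_i=r_i^{-2}g_i$, so that the harmonic radius at $x_i$ in the new metric is normalized to $1$ (the $\mathcal{C}^{1,\a}$ bounds defining $r_H$ are scale-invariant). Under this scaling the Ricci tensor obeys $-\Lambda_i' r_i^2\le\Ric(\tilde g_i)\le\Lambda r_i^2$ measured with respect to $\tilde g_i$, hence $|\Ric(\tilde g_i)|\to 0$; moreover a ball of any fixed radius $\rho$ in $\tilde g_i$ is a $g_i$-ball of radius $\rho r_i\to 0$, so Bishop--Gromov monotonicity (both the lower and the upper volume estimate from $\Ric\ge-\Lambda_i'$) forces $V_{\tilde g_i}B_{\tilde g_i}(x_i,\rho)/(\omega_n\rho^n)\to 1$ for every fixed $\rho$. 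Because $r_H(x_i)=1$ and the harmonic radius is $1$-Lipschitz, on a fixed ball around $x_i$ the rescaled metrics admit harmonic coordinates in which $Q^{-1}\delta_{ij}\le(\tilde g_i)_{ij}\le Q\delta_{ij}$ with uniform $\mathcal{C}^{1,\a}$ control of the components. This equicontinuity, together with the volume lower bound (which rules out collapsing), lets me extract via Arzel\`a--Ascoli a pointed limit $(M_\infty,\tilde g_\infty,x_\infty)$, with convergence in $\mathcal{C}^{1,\beta}_{\mathrm{loc}}$ for every $\beta<\a$ on a fixed ball $B(x_\infty,r_0)$, where $\tilde g_\infty$ is a $\mathcal{C}^{1,\a}$ metric.

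Then I would identify the limit. In harmonic coordinates the Ricci tensor is the elliptic expression
\[ \Ric_{ij}=-\tfrac{1}{2}g^{kl}\p_k\p_l g_{ij}+Q_{ij}(g,\p g), \]
with $Q_{ij}$ quadratic in $\p g$; since $\Ric(\tilde g_i)\to 0$, elliptic regularity bootstraps the convergence and shows $\tilde g_\infty$ is smooth and Ricci-flat, in particular $\Ric(\tilde g_\infty)\ge 0$. On $B(x_\infty,r_0)$ the $\mathcal{C}^{1,\beta}$ convergence gives volume continuity, so $V_{\tilde g_\infty}B(x_\infty,\rho)=\omega_n\rho^n$ for all $\rho\le r_0$. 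A smooth Ricci-flat metric realizing the Euclidean volume ratio at every scale is forced, by the equality case of the Bishop--Gromov comparison (volume rigidity), to be flat on that ball; consequently $\tilde g_\infty$ carries harmonic coordinates in which $\tilde g_{\infty,ij}=\delta_{ij}$, so $r_H(x_\infty)\ge r_0>1$.

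Finally I would close the contradiction using the lower semicontinuity of the harmonic radius under $\mathcal{C}^{1,\a}$ convergence. Pulling back the flat (Euclidean) harmonic coordinates of $\tilde g_\infty$ through the convergence diffeomorphisms and correcting them to genuinely $\tilde g_i$-harmonic charts (solving $\Delta_{\tilde g_i}u=0$ with the limit coordinates as boundary data) produces, for all large $i$, admissible harmonic charts on balls of radius close to $r_0$; hence $\liminf_i r_H(x_i)\ge r_H(x_\infty)>1$, contradicting $r_H(x_i)=1$. The existence and positivity of harmonic coordinates used throughout is furnished by DeTurck--Kazdan \cite{DeTurck-Kazdan}, and this whole scheme is the blow-up behind Theorem~3.2 of \cite{Anderson90}. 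I expect the main obstacle to be the last two steps intertwined: promoting the weak $\mathcal{C}^{1,\a}$ limit to a genuinely smooth Ricci-flat metric via the elliptic system so that volume rigidity applies, and then making the semicontinuity of $r_H$ precise, i.e. showing the correcting harmonic charts survive the limit with the defining bounds $Q^{-1}\delta_{ij}\le g_{ij}\le Q\delta_{ij}$ and the weighted $\mathcal{C}^{1,\a}$ inequality preserved. The volume transfer and the non-collapsing both rest only on Bishop--Gromov comparison under the two-sided Ricci control.
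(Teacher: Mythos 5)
Your overall strategy (contradiction, rescaling by the degenerating harmonic radius, identification of a Ricci-flat blow-up limit, volume rigidity, lower semicontinuity of $r_H$) is the scheme the paper itself follows, but there is a genuine gap at the step where you claim convergence ``on a fixed ball $B(x_\infty,r_0)$'' and conclude $r_H(x_\infty)\ge r_0>1$. After normalizing $r_H(Q,1,\a)(x_i)=1$ in the rescaled metric $\tilde g_i$, the only control you invoke away from $x_i$ is the $1$-Lipschitz property of the harmonic radius, which gives $r_H(z)\ge 1-d_{\tilde g_i}(z,x_i)$ and hence a uniform chart size only on balls $B_{\tilde g_i}(x_i,r)$ with $r<1$; at distance $\ge 1$ from $x_i$ you have no lower bound at all, so no subconvergence can be extracted there. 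Consequently the limit exists only on the open unit ball about $x_\infty$, and even after you identify it with the flat Euclidean unit ball, transplanting its coordinates back to $\tilde g_i$ yields only $\liminf_i r_H(x_i)\ge 1-\epsilon$ for every $\epsilon>0$ --- which is perfectly consistent with $r_H(x_i)=1$ and produces no contradiction.

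The missing ingredient is the point-selection step that the paper performs \emph{before} rescaling: one minimizes $x\mapsto r_H(g_m,x)/d_{g_m}(x,\p B_{g_m}(\overline{x_m},2\delta))$ over $B_{g_m}(\overline{x_m},2\delta)$ to obtain points $y_m$ at which, after rescaling by $r_H(g_m,y_m)^{-2}$, the quantity $d_{h_m}(y_m,\p B_{g_m}(\overline{x_m},2\delta))$ tends to infinity and $r_H(h_m,\cdot)\ge\tfrac12$ holds on balls of \emph{every} fixed radius $R$ about $y_m$ for $m$ large. Only then is the blow-up limit a complete noncompact manifold, which (being Ricci-flat with Euclidean volume ratio) is all of $\R^n$ and has infinite harmonic radius, contradicting $r_H(h_m,y_m)=1$ via exactly the semicontinuity argument you describe. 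Two smaller remarks: your choice to send $\ve_i,\Lambda_i',\delta_i\to0$ is legitimate for the statement as quantified and actually buys a simplification --- the rescaled volume ratios at the blow-up points then converge to exactly $\omega_n$, so the exact equality case of Bishop's inequality suffices and the paper's separate almost-rigidity statement (Lemma \ref{isom}) can be bypassed; and working directly with the $\mathcal{C}^{1,\a}$ harmonic radius rather than the paper's $\mathcal{H}^p_2$ radius followed by Sobolev embedding is a harmless cosmetic difference.
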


By Sobolev embedding theorem, to prove Theorem \ref{C1a}, it suffices to show the following $\mathcal{H}^p_2$-harmonic radius estimate:

\begin{thm} $\forall n\in\mathbb{N}$, there exist $\ve=\ve(n)>0, \Lambda'=\Lambda'(n)>0$ and $\delta=\delta(n)>0$ such that if $(M, g)$ is a smooth complete Riemannian $n$-manifold with $-\Lambda'\leq\Ric(g)\leq\Lambda$ and $V_gB_g(x_0, 1)\geq(1-\ve)\omega_n$ for some $x_0\in M$ and some $\Lambda\geq 0$, then $\forall Q>1, p>n$, there exists a constant $C=C(n, \Lambda, Q, p)>0$ such that for all $x\in B_g(x_0, \delta)$ \bee
r_H(Q, 2, p)(x)\geq C. \eee Here $r_H(Q, k, p)(x)$ denotes the $\mathcal{H}^p_k$-harmonic radius at $x$ i.e. the largest number such that on $B_x(r_H)$, there exists a harmonic coordinate such that \bee
Q^{-1}\delta_{ij}\leq g_{ij}\leq Q\delta_{ij} \eee and \bee 
\sum\limits_{1\leq |\b|\leq k} r_H^{2-\frac{n}{p}}\|\p_\b g_{ij}\|_{L^p}\leq Q-1. \eee 
\begin{proof} We argue by contradiction. For some $n\in \mathbb{N}, Q>1, p>n, \Lambda\geq0$, there exist a sequence of smooth complete Riemannian $n$-manifolds $(M_m, g_m)$, a sequence of points $\overline{x_m}\in M$ and a sequence of points $x_m\in B_{g_m}(\overline{x_m}, \delta)$ such that for all $x\in M_m$, \bee
-\Lambda'\leq\Ric_{g_m}(x)\leq\Lambda \eee and \bee
V_{g_m}B_m\geq (1-\ve)\omega_n, \eee where $\ve, \Lambda'$ and $\delta$ will be chosen later depending only on $n$  and $B_m:=B_{g_m}(\overline{x_m}, 1)$. Moreover, $r_H(g_m, x_m)\to 0$. 

Now we consider the continuous function \bee x\mapsto \frac{r_H(g_m, x)}{d_{g_m}(x,\p B_{g_m}(\overline{x_m}, 2\delta))}\eee on $B_{g_m}(\overline{x_m}, 2\delta)$. 

Then there exists $y_m\in B_{g_m}(\overline{x_m}, 2\delta)$ such that for all $x\in B_{g_m}(\overline{x_m}, 2\delta)$, \bee
 \frac{r_H(g_m, y_m)}{d_{g_m}(y_m,\p B_{g_m}(\overline{x_m}, 2\delta))}\leq  \frac{r_H(g_m, x)}{d_{g_m}(x,\p B_{g_m}(\overline{x_m}, 2\delta))}. \eee In particular, \bee
 r_H(g_m, y_m)\leq \frac{r_H(g_m, x_m)}{d_{g_m}(x_m,\p B_{g_m}(\overline{x_m}, 2\delta))}\cdot d_{g_m}(y_m,\p B_{g_m}(\overline{x_m}, 2\delta))\leq \frac{2\delta}{2\delta-\delta}r_H(g_m, x_m)\to 0.\eee
 
 Therefore, we have \bee
 \lim\limits_{m\to\infty}  r_H(g_m, y_m)= \lim\limits_{m\to\infty}  \frac{r_H(g_m, y_m)}{d_{g_m}(y_m,\p B_{g_m}(\overline{x_m}, 2\delta))}=0. \eee
 
 Next, we consider $h_m:=r_H(g_m, y_m)^{-2}g_m$. Then we have \bee\begin{split}
&r_H(h_m, y_m)=1,\\
&\lim\limits_{m\to\infty} |\Ric_{(M_m, h_m)}|=0,\\
&\lim\limits_{m\to\infty} d_{h_m}(y_m, \p B_{g_m}(\overline{x_m}, 2\delta))=\infty.\end{split} \eee

Note that for all $y\in B_{g_m}(\overline{x_m}, 2\delta)$, \bee
r_H(h_m, y)=\frac{r_H(g_m, y)}{r_H(g_m, y_m)}\geq \frac{d_{g_m}(y,\p B_{g_m}(\overline{x_m}, 2\delta))}{d_{g_m}(y_m,\p B_{g_m}(\overline{x_m}, 2\delta))}=\frac{d_{h_m}(y,\p B_{g_m}(\overline{x_m}, 2\delta))}{d_{h_m}(y_m,\p B_{g_m}(\overline{x_m}, 2\delta))}.\eee Set $\eta_m=\frac{1}{d_{h_m}(y_m,\p B_{g_m}(\overline{x_m}, 2\delta))}$. Then $\lim\limits_{m\to\infty} \eta_m=0$ and for all $y\in B_{y_m}(\frac{1}{2\eta_m})$, \bee r_H(h_m, y)\geq \frac 1 2.\eee

Therefore, we obtain that given any $R>>1$, $r_H(h_m, y)\geq \frac 1 2$ on $B_{y_m}(R)$ for all $m$ large enough. Then we follow similar argument as Lemma 2.1 and Lemma 2.2 in \cite{Anderson90} (See also Theorem 11 and Proposition 12 in \cite{Hebey-Herzlich}), we see that $(B_{g_m}(\overline{x_m}, \delta), y_m, h_m)$ converges $\mathcal{H}^p_2$ to a limiting complete Riemannian $n$-manifold $(M, y, h)$ uniformly on compact set. Moreover, $(M, h)$ is smooth Ricci-flat complete Riemannian $n$-manifold.

Next, we will claim that $(M, h)$ is isometric to $\mathbb{R}^n$. We first claim that $V_hB_h(y, R)\geq (1-\ve')\omega_n R^n$ for all $R$ large enough, where $\ve'$ is a constant depending only on $n$ in the following lemma, i.e. Lemma \ref{isom}. To see this, we note that for all $r\leq 1-2\delta$, by volume comparison, we have \bee
\frac{V_{g_m}B_{g_m}(y_m, r+2\delta)}{\int^r_0 \sinh^{n-1}(\sqrt{\Lambda'}r)}\geq \frac{V_{g_m}B_{g_m}(\overline{x_m}, 1)}{\int^1_0 \sinh^{n-1}(\sqrt{\Lambda'}r)}\geq \frac{(1-\ve)\omega_n}{\int^1_0 \sinh^{n-1}(\sqrt{\Lambda'}r)}, \eee
which implies \bee
\frac{V_{h_m}B_{h_m}(y_m, r_m^{-1}(r+2\delta))}{\int^{r_m^{-1}(r+2\delta)}_0 \sinh^{n-1}(\sqrt{\Lambda'r_m}r)}\geq\frac{(1-\ve)\omega_n\int^r_0 \sinh^{n-1}(\sqrt{\Lambda'}r)}{r_m^{n}\int^1_0 \sinh^{n-1}(\sqrt{\Lambda'}r)\cdot \int^{r_m^{-1}(r+2\delta)}_0 \sinh^{n-1}(\sqrt{\Lambda'r_m}r)}. \eee Here $r_m:=r_H(g_m, y_m)$.

Now we consider the ball $B_h(y, 10R)$ in $M$ for any fixed large $R$. Let $\phi_m$ be the diffeomorphism from $U_m$ into $B_{g_m}(\overline{x_m}, \delta)$, where $\{U_m\}$ exhausts $M$. Then for any $\delta'>0$, there exists a $m_0$ may depending on $\delta'$ and $R$, for all $m\geq m_0$, $(1+\delta')^{-2}h\leq\phi^\ast_m h_m\leq(1+\delta')^2h$ on $B_h(y, 10R)$. Next we will claim for any $\delta''>0$, there exists a $m_1$ may depending on $\delta''$ and $R$, for all $m\geq m_1$, \bee
B_{h_m}(y_m, (1+\delta'')^{-1}R)\subset \phi_m(B_h(y, R)).\eee

To see this, we consider any $z\in B_{h_m}(y_m, (1+\delta'')^{-1}R)$, let $\gamma_m$ be the minimal geodesic from $y_m$ to $z$ with respect to $h_m$. Assume $\gamma_m$ can be pulled back by $\phi_m$ into $B_h(y, 10R)$ , then we have \bee 
d_h(\phi^{-1}_m(z), y)\leq (1+\delta')d_{h_m}(z, y_m)\leq\frac{1+\delta'}{1+\delta''}R.\eee Now one can take $\delta'=\delta''$, then we obtain \bee
B_{h_m}(y_m, (1+\delta'')^{-1}R)\subset \phi_m(B_h(y, R)).\eee Then we take $r=1-3\delta$, by volume comparison, it is not hard to see $V_hB_h(y, R)\geq (1-\ve')\omega_n R^n$ for all $R$ large enough, where we can choose $\delta, \ve, \Lambda'>0$ small enough depending on $n$. Therefore, by Lemma \ref{isom}, we see that $(M, h)$ is isometric to $\mathbb{R}^n$. This leads a contradiction on the harmonic radius.

It remains to show \bee
a:=\sup\{\eta\in[0,1] | \phi_m^{-1}(\gamma_m|_{[0,\eta]})\subset B_h(y, 10R)\}=1. \eee It is easy to see $a>0$ and we argue by contradiction. Assume $a<1$, then there exists $\eta_0>0$ such that $\gamma_m(\eta_0)\in \phi_m(\p B_h(y, 10R))$ and $\gamma_m|_{[0, \eta_0)}\subset \phi_m(B_h(y, 10R))$. It is not hard to see that $\gamma_m(\eta_0)\in  \phi_m(B_h(y, R))$, therefore one can extend $\gamma_m$ outside $\phi_m(B_h(y, R))$ a little which is in  $\phi_m(B_h(y, 10R))$. This leads a contradiction.

\end{proof}

\end{thm}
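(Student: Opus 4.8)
The plan is to argue by contradiction through a blow-up (point-picking and rescaling) argument in the spirit of Anderson's $\ve$-regularity theory. Suppose the asserted uniform lower bound fails for some fixed $n$, $Q>1$, $p>n$ and $\Lambda\geq0$, no matter how small one takes $\ve,\Lambda',\delta$. Then one obtains a sequence of complete manifolds $(M_m,g_m)$ with $-\Lambda'\leq\Ric(g_m)\leq\Lambda$ and $V_{g_m}B_{g_m}(\overline{x_m},1)\geq(1-\ve)\omega_n$ (with $\ve,\Lambda',\delta\to0$ along the sequence) and points $x_m\in B_{g_m}(\overline{x_m},\delta)$ at which the harmonic radius degenerates, $r_H(g_m,x_m)\to0$.

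First I would run a point-selection step to replace the degenerating points by points that are extremal relative to their distance to the boundary. Minimizing the scale-invariant ratio $x\mapsto r_H(g_m,x)/d_{g_m}(x,\p B_{g_m}(\overline{x_m},2\delta))$ over $B_{g_m}(\overline{x_m},2\delta)$ produces $y_m$ with $r_H(g_m,y_m)\to0$ whose normalized harmonic radius is minimal. Rescaling to $h_m:=r_H(g_m,y_m)^{-2}g_m$ then yields $r_H(h_m,y_m)=1$, $d_{h_m}(y_m,\p B_{g_m}(\overline{x_m},2\delta))\to\infty$, and, since the Ricci bound is scale-covariant while distances blow up, $|\Ric_{h_m}|\to0$. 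The extremality of $y_m$ converts into the decisive uniform lower bound $r_H(h_m,y)\geq\tfrac12$ on each ball $B_{y_m}(R)$ once $m$ is large.

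Next I would invoke Anderson-type convergence (Lemmas 2.1 and 2.2 of \cite{Anderson90}, equivalently Theorem 11 and Proposition 12 of \cite{Hebey-Herzlich}): a uniform harmonic-radius lower bound gives, in harmonic charts, two-sided bounds on $g_{ij}$ together with uniform $\mathcal{H}^p_2$ control, and the harmonicity of the coordinates combined with the Ricci bound makes the metric solve an elliptic system amenable to $L^p$ estimates. Hence a subsequence of $(B_{g_m}(\overline{x_m},\delta),y_m,h_m)$ converges in $\mathcal{H}^p_2$, uniformly on compact sets, to a complete pointed limit $(M,y,h)$ that is smooth and, since $|\Ric_{h_m}|\to0$, Ricci-flat. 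The crux is then to show $(M,h)$ is isometric to $\mathbb{R}^n$: I would transfer the scale-one almost-Euclidean volume of the $g_m$ to all large scales of $h$. By Bishop--Gromov with the lower bound $-\Lambda'$, the normalized volume ratio of $g_m$ at radius $r+2\delta$ about $y_m$ is bounded below by the ratio at radius $1$ about $\overline{x_m}$; rescaling by $r_H(g_m,y_m)^{-2}$ and letting $m\to\infty$ (so that the comparison integrand $\sinh^{n-1}(r_m\sqrt{\Lambda'}\,s)$ tends to the Euclidean $s^{n-1}$, with $r_m:=r_H(g_m,y_m)$) gives $V_hB_h(y,R)\geq(1-\ve')\omega_nR^n$ for all large $R$, where $\ve'$ is forced below the threshold of the rigidity lemma by choosing $\ve,\Lambda',\delta$ small. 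Applying that lemma (Lemma \ref{isom}) yields $(M,h)\cong\mathbb{R}^n$. Tracking the diffeomorphisms realizing the convergence and the resulting ball inclusions, such as $B_{h_m}(y_m,(1+\delta'')^{-1}R)\subset\phi_m(B_h(y,R))$, is what makes this volume transfer rigorous.

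Finally I would extract the contradiction. The harmonic radius is continuous under $\mathcal{H}^p_2$-convergence, so $r_H(h,y)=\lim_m r_H(h_m,y_m)=1$, whereas flat $\mathbb{R}^n$ has infinite harmonic radius at every point; this impossibility shows the original assumption was untenable and proves the uniform bound. I expect the main obstacle to lie in producing an honest $\mathbb{R}^n$ in the limit: one must both make the Anderson convergence rigorous in $\mathcal{H}^p_2$ from only a harmonic-radius lower bound and a Ricci (rather than sectional) curvature bound, and carry out the Bishop--Gromov bookkeeping under the rescaling $r_m\to0$ so that the single scale-one volume hypothesis survives as an almost-Euclidean bound at every scale of the limit. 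It is exactly here that trading the usual sectional-curvature hypothesis for Ricci plus the volume condition, funneled through the rigidity lemma, does the essential work.
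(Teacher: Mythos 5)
Your proposal is correct and follows essentially the same argument as the paper: the same contradiction setup, the same point-picking via minimizing $r_H(g_m,x)/d_{g_m}(x,\p B_{g_m}(\overline{x_m},2\delta))$, the same rescaling $h_m=r_H(g_m,y_m)^{-2}g_m$ with Anderson/Hebey--Herzlich convergence to a Ricci-flat limit, the same Bishop--Gromov transfer of the almost-Euclidean volume bound (including the ball inclusions via the convergence diffeomorphisms), and the same appeal to the rigidity lemma to get $\mathbb{R}^n$ and contradict the normalized harmonic radius. No substantive differences to report.
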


\begin{lma}\label{isom} For any $n\in\mathbb{N}$, there exists $\ve=\ve(n)>0$ such that if $(N, g)$ is a smooth complete Ricci-flat $n$-manifold with $V_gB_g(x_0, R)\geq (1-\ve)\omega_n R^n$ for some $x_0\in N$ for all $R$ large enough, then $N$ is isometric to $\mathbb{R}^n$.

\end{lma}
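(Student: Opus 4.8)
The plan is to deduce flatness of $N$ from an almost-maximal volume condition that, by comparison geometry, propagates from the single ball in the hypothesis to all centers and all scales, and then to identify a complete flat manifold of positive asymptotic volume ratio with $\mathbb{R}^n$. First I would normalize the hypothesis. Since $N$ is Ricci-flat we have $\Ric(g)\geq 0$, so by Bishop--Gromov the volume ratio $r\mapsto V_gB_g(p,r)/(\omega_n r^n)$ is non-increasing for every $p\in N$ and bounded above by $1$. Hence its limit, the asymptotic volume ratio $\mathrm{AVR}(N)=\lim_{r\to\infty}V_gB_g(p,r)/(\omega_n r^n)$, exists and is independent of the basepoint $p$, and the assumption $V_gB_g(x_0,R)\geq(1-\ve)\omega_n R^n$ for all large $R$ gives $\mathrm{AVR}(N)\geq 1-\ve$. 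Monotonicity then forces $V_gB_g(p,r)\geq(1-\ve)\omega_n r^n$ for \emph{every} $p\in N$ and \emph{every} $r>0$: every metric ball, of any center and radius, is almost Euclidean in volume.

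The heart of the argument is a curvature gap obtained from $\e$-regularity together with scaling. Choosing $\ve<\ve_0(n)$ below the threshold of Anderson's $\e$-regularity theorem for Ricci-flat (more generally Einstein) metrics \cite{Anderson90}, the almost-maximal volume of a unit ball around any point yields an interior curvature bound $\sup_{B_g(p,1/2)}|\Rm|\leq C(n)$. I would apply this after rescaling: for any $r>0$ the metric $r^{-2}g$ is again Ricci-flat and, by the previous paragraph, satisfies $V_{r^{-2}g}B_{r^{-2}g}(p,1)\geq(1-\ve)\omega_n$, so $|\Rm_{r^{-2}g}|(p)\leq C(n)$, that is $|\Rm_g|(p)\leq C(n)\,r^{-2}$. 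Letting $r\to\infty$ gives $\Rm_g(p)=0$; as $p$ is arbitrary, $(N,g)$ is flat. I expect this $\e$-regularity step to be the main obstacle: it is the one genuinely analytic input, and one must check that the volume-gap hypothesis (rather than a small $L^{n/2}$ curvature bound) suffices, which is exactly what Anderson's theory provides for metrics with two-sided — here vanishing — Ricci bounds and non-collapsed volume. An equivalent route is a contradiction/compactness argument in the spirit of the theorem above: were the lemma false there would be Ricci-flat $(N_i,g_i,x_i)$ with $\mathrm{AVR}(N_i)\to 1$ but none isometric to $\mathbb{R}^n$; non-collapsing together with the Ricci bound give, via the harmonic-radius convergence theory of \cite{Anderson90}, a pointed $\mathcal{C}^{1,\a}$ limit $(N_\infty,g_\infty)$ that is smooth and Ricci-flat, while Colding's volume continuity forces $\mathrm{AVR}(N_\infty)=1$, whence equality in Bishop--Gromov makes $N_\infty$ isometric to $\mathbb{R}^n$; the same $\e$-regularity then closes the gap at finite $i$.

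Finally I would conclude from flatness. A complete flat $n$-manifold is $\mathbb{R}^n/\Gamma$ for a group $\Gamma$ of Euclidean isometries acting freely and properly discontinuously. A nontrivial such $\Gamma$ must be infinite, since a finite group of affine isometries fixes the center of mass of any orbit and so cannot act freely on the contractible space $\mathbb{R}^n$; by the Bieberbach structure theory its translational part then has positive rank, which reduces the order of volume growth and gives $\mathrm{AVR}=0$. Since $\mathrm{AVR}(N)\geq 1-\ve>0$, the group $\Gamma$ must be trivial, so $N$ is isometric to $\mathbb{R}^n$, as claimed.
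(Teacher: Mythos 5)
Your overall strategy --- propagate the volume bound to all centers and all scales by Bishop--Gromov monotonicity, deduce flatness from a scale-invariant $\epsilon$-regularity estimate, and then exclude nontrivial quotients of $\R^n$ by a volume-growth count --- is sound in outline and genuinely different from the paper's argument. The paper (following Anderson's Gap Lemma) argues by contradiction: it selects points $x_i$ minimizing $\inj_{N_i}(x)/d_{N_i}(x,\p B_{x_0}(iR_i+i))$, rescales so that the injectivity radius at $x_i$ equals $1$, and uses the resulting locally uniform injectivity radius lower bound to extract a smooth complete Ricci-flat pointed limit $(Y,x,h)$ with $\inj_h(x,Y)=1$; since the limit has asymptotic volume ratio $1$ it is $\R^n$ by Bishop--Gromov rigidity, a contradiction. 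Your first and last steps are correct; the final step does not even need Bieberbach, since an infinite deck group $\Gamma$ forces $N(r):=\#\{\gamma\in\Gamma : d(x,\gamma x)<r\}\to\infty$ while $V B_{\R^n/\Gamma}(\bar x,r)\le \omega_n(2r)^n/N(r)$, so the asymptotic volume ratio vanishes.

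The step you rightly identify as the main obstacle is, however, where the argument as written breaks down: the $\epsilon$-regularity statement ``Ricci-flat plus $V_gB_g(p,1)\ge(1-\ve)\omega_n$ implies $\sup_{B_g(p,1/2)}|\Rm|\le C(n)$'' is, in \cite{Anderson90}, \emph{derived from} the Gap Lemma --- which is precisely the lemma you are proving --- so citing ``Anderson's theory'' for it is circular relative to that reference. The same issue affects your fallback route: non-collapsing together with a Ricci bound does not by itself yield pointed $\mathcal{C}^{1,\a}$ convergence to a smooth limit; one needs a harmonic radius or injectivity radius lower bound, which is again either the $\epsilon$-regularity or the paper's point-picking. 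The repair is standard but must be supplied: either prove the $\epsilon$-regularity independently by a curvature point-picking blow-up (rescale at almost-maximal curvature points; bounded curvature plus non-collapsed volume gives injectivity radius lower bounds by Cheeger's lemma, hence a smooth complete Ricci-flat limit with $|\Rm|=1$ at the basepoint and asymptotic volume ratio $1$, contradicting Bishop--Gromov rigidity), or invoke the Cheeger--Colding--Tian $\epsilon$-regularity for Einstein metrics, or simply adopt the injectivity-radius point-picking that the paper uses, which avoids any a priori curvature bound altogether.
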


\begin{proof} We argue by contradiction. Suppose not, then there exists a sequence of  smooth complete Ricci-flat $n$-manifolds $(N_i, g_i)$ with $V_{g_i}B_{g_i}(R)/R^n\geq (1-\ve_i)\omega_n$, where $\ve_i\to0$ and $(N_i, g_i)$ is not isometric to $\mathbb{R}^n$ for all $i$. Now we choose an origin $x_0\in N_i$ for each $i$, and let $R_i=\inj_{N_i}(x_0)$ which is finite by the fact that $(N_i, g_i)$ is not isometric to $\mathbb{R}^n$. Let $x_i\in N_i$ be points realizing the minimum of \bee
c_i(x)=\frac{\inj_{N_i}(x)}{d_{N_i}(x, \p B_{x_0}(iR_i+i))}, \eee for $x\in B_{x_0}(iR_i+i)$. Then $\min c_i\to 0$ as $i\to\infty$. Now we consider $h_i:=(\inj_{N_i}(x_i))^{-2}g_i$, it follows that $\inj_{h_i}(x_i)=1$ and \bee
d_{h_i}(x_i, \p B_{x_0}(iR_i+i))\to \infty. \eee Therefore, as before, it follows that $\inj_{h_i}(x)$ is uniformly bounded below on balls of any fixed finite radius about $x_i$ in the $h_i$ metric. Then by Lemma 2.1 and Lemma 2.2 in \cite{Anderson90} (See also Theorem 11 and Proposition 12 in \cite{Hebey-Herzlich}), we see that $(N_i, x_i, h_i)$ sub-converges to a smooth complete Ricci-flat $n$-manifold $(Y, x, h)$ in the pointed $\mathcal{C}^\infty$ topology with $x=\lim x_i$. In particular, we have $\inj_h(x, Y)=1$. On the other hand, by letting $i\to\infty$, we have \bee
\frac{V_YB(r)}{r^n}\geq \omega_n, \eee which implies $Y$ is isometric to $\mathbb{R}^n$. This leads a contradiction.

\end{proof}

\begin{rem} By the similar argument as above, one can show Theorem \ref{C1a} under the assumption $|\Ric|\leq \Lambda$ and $V_{g}B_g(x_0, r_0)\geq (1-\ve(n))\omega_nr_0^n$ with $r_0$ and $\delta$ depending on $n$ and $\Lambda$.

\end{rem}

\section{Existence of Exhaustion functions and its applications}

In this section, we will give a proof of an existence result of exhaustion functions and its applications to short-time existence of Ricci flow and Yau's uniformization conjecture.

Once we obtain the harmonic radius lower bound from the previous section, we can construct an exhaustion function with bounded gradient and Hessian using the method as in Tam \cite{Tam10}. Namely, we can obtain our main result of this note, i.e. Theorem \ref{Ex}.

\textit{Proof of Theorem \ref{Ex}:} Since the $\mathcal{C}^0$ and gradient bound of the exhaustion function use only the completeness of the metric and the lower bound of Ricci curvature in the proof of \cite{Tam10}, we suffice to point out the difference in the proof of Hessian bound. Now by Theorem \ref{C1a}, we have harmonic radius lower bound. Then we use this harmonic coordinate chart instead of the exponential map in the original proof and do the computation in the Euclidean ball $B_e(0, a)$ where $a$ depends only on the harmonic radius lower bound, we can obtain the Hessian bound of the  exhaustion function following Tam's method. \qed

\begin{rem} Actually, to obtain an exhaustion function with bounded gradient and Hessian, it suffices to assume the metric is complete with Ricci curvature bounded from below and has a $\mathcal{C}^{1, \a}$-harmonic radius lower bound. 
\end{rem}

Now we can apply Theorem \ref{Ex} to show Theorem \ref{main}. Let $U_\rho:=\gamma^{-1}([0, \rho))$, where $\gamma$ is from Theorem \ref{Ex}. Then $U_\rho$ is pre-compact and exhausts $M$ as $\rho\to\infty$. We will modify $g$ in $U_\rho$ to obtain a smooth complete Riemannian metric $h$ with bounded curvature in $U_\rho$ which also preserves lower bound of Ricci curvature and volume ratio of $g$. We will show the following key lemma.

\begin{lma}\label{conf}  For all $n\in\mathbb{N}, r_0\geq 1$ and $\Lambda\geq 0$, there exist $\rho_0(n, \Lambda)>0, \kappa(n)>0, L(n, \Lambda)>0$ and $r_1(n, r_0, \Lambda)>0$ with the following property. Suppose $-\Lambda'\leq\Ric(g)\leq\Lambda$ and $V_gB_g(x, r)\geq (1-\ve)\omega_nr^n$ for all $x\in M$, $r\leq r_0$. Here $\ve$ and $\Lambda'$ are in Theorem \ref{Ex}. Then for all $\rho\geq\rho_0$, there exists a Riemannian metric $h$ on $U_\rho$ satisfies the following properties:

(i) $(U_\rho, h)$ is a smooth complete Riemannian manifold with bounded curvature and $h\equiv g$ on $U_{\rho(1-\kappa)}$.

(ii) $\Ric(h)\geq -L$.

(iii) For all $0\leq r\leq r_1$ with $B_h(x, r)\subset U_\rho$, $V_hB_h(x, r)\geq (1-2\ve)\omega_nr^n$.

\begin{proof} For $0<\kappa<1$ which will be determined later, we define a function $f:[0, 1)\to[0,\infty)$ by:\bee
f(s):=\left\{\begin{array}{ll}
0, & 0\leq s\leq 1-\kappa \\
-\log (1-(\frac{s-1+\kappa}{\kappa})^2), & 1-\kappa\leq s< 1
\end{array} \right. \eee

For $1-\kappa\leq s< 1$, we have:\bee
0<f'(s)=\frac{2(s-1+\kappa)}{\kappa^2-(s-1+\kappa)^2}\leq \frac{2\kappa}{(\kappa^2-(s-1+\kappa)^2)^2} \eee and \bee
0<f''(s)=\frac{2(\kappa^2+(s-1+\kappa)^2)}{(\kappa^2-(s-1+\kappa)^2)^2}\leq \frac{4\kappa^2}{(\kappa^2-(s-1+\kappa)^2)^2}.\eee

Now we consider a function on $U_\rho$ defined by \bee
f(x):=f(\rho^{-1}\gamma(x)). \eee Define $h:=e^{2f}g$, then $h$ is complete and $h\equiv g$ on $U_{\rho(1-\kappa)}$. Now we will show $h$ has bounded curvature. To see it, let ${e_i}$ be an orthonormal frame with respect to $g$, we have:\bee
K^h_{ij}=e^{-2f}(K^g_{ij}-\sum\limits_{k\neq i,j}|\cd_k f|^2+\cd_i\cd_i f+\cd_j\cd_j f). \eee

Note that \bee
e^{-2f}|\cd_if|^2\leq \rho^{-2}C_1^2e^{-2f}(\frac{2\kappa}{(\kappa^2-(s-1+\kappa)^2)^2} )^2\leq \frac{4C_1^2}{\rho^2\kappa^2} \eee and \bee
e^{-2f}|\cd_i\cd_if|\leq \frac{4C_1^2}{\rho^2\kappa^2}+\frac{2C_1}{\rho\kappa}. \eee Here $C_1$ is the constant from Theorem \ref{Ex} depending only on $n$ and $\Lambda$.

Therefore, we have $|K^h_{ij}|\leq C(U_\rho, g, \rho, \kappa, C_1)$.

Next, we will show a lower bound of Ricci curvature of $h$. To see it, we note that \bee
R^h_{ij}=R^g_{ij}-(n-2)\cd_i\cd_j f+(n-2)\cd_if\cd_jf-(\triangle f+(n-2)|\cd f|^2)g_{ij}.\eee

From this, it is easy to see $\Ric(h)\geq -L(n, \rho, \kappa, C_1)h$.

Finally, we will show a lower bound of volume ratio with respect to $h$ where we will determine $\kappa$.

(a) If $x\in U_{\rho(1-2\kappa)}$, then $B_g(x, 1)\subset U_{\rho(1-\kappa)}$. To see it, we consider the closest (with respect to $g$) two points $x\in\p U_{\rho(1-2\kappa)}$ and $y\in U_{\rho(1-\kappa)}$, we have\bee\begin{split}
d(x,y)&\geq d(O, y)-d(O, x)\\
&\geq \gamma(y)-C_1-\gamma(x)\\
&=\kappa\rho-C_1.
\end{split} \eee

Then for any $\rho\geq \frac{C_1+1}{\kappa}$, we have $d(x,y)\geq 1$. Thus, we obtain $B_g(x, 1)\subset U_{\rho(1-\kappa)}$. Since $h\equiv g$ on $U_{\rho(1-\kappa)}$, we have \bee
V_hB_h(x, r)\geq (1-\ve)\omega_nr^n \eee for all $r\leq 1$ and for all $x\in U_{\rho(1-2\kappa)}$.

(b) If $\gamma(x)\geq (1-2\kappa)\rho$, we first claim the following statement is true (for the proof, see \cite{He16} or \cite{Lee-Tam17}).

Claim: For all $1-2\kappa\leq s<1$, there exists $\tau>0$ such that $0<s-\tau<s+\tau<1$, \bee
1\leq \exp(f(s+\tau)-f(s-\tau))\leq 1+C_2\kappa \eee and \bee
\tau\exp(f(s-\tau))\geq C_3\kappa^2, \eee where $C_2$ and $C_3$ are some absolute constants. 

Let $s:=\frac{\gamma(x)}{\rho}$ and $r_2$ be the largest $r$ such that \bee
B_h(x, r)\subset\{ y\in U_\rho | s-\tau< \frac{\gamma(y)}{\rho} < s+\tau\}. \eee

Say for example, there is $y\in\p B_h(x, r_2)$ with $\gamma(y)=(s+\tau)\rho$. Let $(\xi(t), t\in[0,1])$ be a minimal geodesic joining $x$ and $y$ with respect to $h$ with length $l$ mesured by $g$. Then we have $r_2\geq \exp(f(s-\tau))l$. On the other hand, \bee\begin{split}
\tau\rho&=|\gamma(x)-\gamma(y)|\\
&=\int^1_0\langle \cd^g\gamma, \xi' \rangle_g dt\\
&\leq C_1l.\end{split} \eee

This implies $r_2\geq \frac{C_3\rho\kappa^2}{C_1}$ by the above claim. Hence for $r<\min\{r_0, r_2\}$, we have\bee\begin{split}
V_hB_h(x, r)&\geq V_hB_g(x, e^{-f(s+\tau)r})\\
&\geq \exp[n(f(s-\tau)-f(s+\tau))](1-\ve)\omega_nr^n\\
&\geq(1+C_2\kappa)^{-n}(1-\ve)\omega_nr^n\\
&\geq(1-2\ve)\omega_nr^n. \end{split} \eee Here we take $\kappa$ small enough depending only on $n$.

To sum up, we can take $\rho_0, L$ depending only on $n, \Lambda$ and $r_1$ depending only on $n, \Lambda, r_0$ satisfying (i), (ii), (iii) in the conclusion of the lemma.

\end{proof}

\end{lma}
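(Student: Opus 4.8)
The plan is to modify $g$ only near the boundary $\p U_\rho$ by a conformal change $h=e^{2f}g$, with the conformal factor engineered to blow up logarithmically as one approaches $\p U_\rho$, thereby pushing the boundary to infinity (which yields completeness) while leaving $g$ untouched deep inside $U_\rho$. Concretely, I would fix a profile $F\colon[0,1)\to[0,\infty)$ that vanishes on $[0,1-\kappa]$ and equals $-\log\lf(1-\lf(\tfrac{s-1+\kappa}{\kappa}\ri)^2\ri)$ near $s=1$, and set $f(x)=F(\rho^{-1}\gamma(x))$, where $\gamma$ is the exhaustion function supplied by Theorem \ref{Ex}. Since $F\equiv 0$ on $[0,1-\kappa]$ one gets $h\equiv g$ on $U_{\rho(1-\kappa)}$ for free; and since $e^{f}=(1-(\cdots)^2)^{-1}$ is non-integrable as $s\to 1^-$, every curve reaching $\p U_\rho$ has infinite $h$-length, so $(U_\rho,h)$ is complete. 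This conformal ansatz is the structural backbone of the argument.

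For the two curvature conditions I would compute everything from the standard conformal transformation formulas, expressing $\cd f$ and $\cd\cd f$ through $F',F''$ and the gradient and Hessian of $\gamma$. The key leverage is that Theorem \ref{Ex} gives $|\cd\gamma|\le C_1$ and $|\cd\cd\gamma|\le C_1$, so $|\cd f|\lesssim \rho^{-1}C_1 F'$ and $|\cd\cd f|\lesssim \rho^{-2}C_1^2F''+\rho^{-1}C_1F'$. For the sectional curvatures I would use
\bee
K^h_{ij}=e^{-2f}\lf(K^g_{ij}-\sum_{k\neq i,j}|\cd_k f|^2+\cd_i\cd_i f+\cd_j\cd_j f\ri),
\eee
observing that the $e^{-2f}$ prefactor exactly cancels the blow-up of $F',F''$ near $s=1$ built into the logarithmic profile, so $|K^h_{ij}|$ stays bounded. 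For the Ricci lower bound I would use
\bee
R^h_{ij}=R^g_{ij}-(n-2)\cd_i\cd_j f+(n-2)\cd_i f\,\cd_j f-\lf(\Delta f+(n-2)|\cd f|^2\ri)g_{ij},
\eee
combine it with $\Ric(g)\ge-\Lambda'$, and exploit that the $f$-derivative terms all carry factors $\rho^{-1}$ or $\rho^{-2}$; once $\rho\ge\rho_0(n,\Lambda)$ these are controlled, giving $\Ric(h)\ge -L$ with $L$ depending only on $n,\Lambda$.

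The volume-ratio estimate (iii) is where the real work lies, and I would split it by location; the boundary case is where I expect the main obstacle. For interior points $x$ with $\gamma(x)\le(1-2\kappa)\rho$, the bounds $d(x)+1\le\gamma(x)\le d(x)+C$ force $B_g(x,1)\subset U_{\rho(1-\kappa)}$ once $\rho\ge(C_1+1)/\kappa$, so on such balls $h=g$ and the hypothesis transfers verbatim. For points near the boundary, $h$ differs from $g$ by the rapidly varying factor $e^{2f}$, so a direct ball-to-ball comparison fails. The remedy is to localize to a thin annular slab $\{\,s-\tau<\rho^{-1}\gamma<s+\tau\,\}$ on which $F$ is almost constant: I would invoke the elementary claim (as in \cite{He16}, \cite{Lee-Tam17}) that for each $s\in[1-2\kappa,1)$ there is $\tau>0$ with $\exp(F(s+\tau)-F(s-\tau))\le 1+C_2\kappa$ and $\tau\exp(F(s-\tau))\ge C_3\kappa^2$. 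The first inequality says $F$ varies across the slab by a factor at most $1+C_2\kappa$, while the second guarantees the slab has definite $h$-width, so that an $h$-ball of controlled radius fits inside it. On the slab $h$ is then a near-constant rescaling of $g$, whence $V_hB_h(x,r)\ge(1+C_2\kappa)^{-n}(1-\ve)\omega_n r^n$, and choosing $\kappa=\kappa(n)$ small enough makes $(1+C_2\kappa)^{-n}(1-\ve)\ge 1-2\ve$. Finally I would read off $r_1(n,\Lambda,r_0)$ as the radius up to which this comparison stays valid and collect the dependencies of $\rho_0,\kappa,L,r_1$ as claimed.
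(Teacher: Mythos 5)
Your proposal is correct and follows essentially the same route as the paper: the identical logarithmic conformal profile $h=e^{2f}g$ with $f=F(\rho^{-1}\gamma)$, the same conformal transformation formulas for sectional and Ricci curvature controlled via the gradient and Hessian bounds on $\gamma$ from Theorem \ref{Ex}, and the same two-case volume argument (interior points where $h\equiv g$, boundary points handled by the slab claim from \cite{He16} and \cite{Lee-Tam17}). No substantive differences to report.
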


\begin{rem} Although the function $f$ defined from Lemma \ref{conf} is not smooth at $s=1-\kappa$, one can smooth it as in \cite{Lee-Tam17}.

\end{rem}

Next, let us recall a pseudolocality theorem which is proved by Tian-Wang \cite{Tian-Wang15}.

\begin{thm}[Tian-Wang]\label{pseu} Given any $n\in\mathbb{N}$ and $\alpha>0$, there exists $\ve(n,\alpha)>0$ depending only on $n$ and $\alpha$ such that if $(M, g(t), t\in[0,T))$ is a smooth complete Ricci flow with bounded curvature and if $V_{g(0)} B_{g(0)}(x_0,1)\geq(1-\ve)\omega_n$ and $\Ric (g(0))\geq -\ve$ on $B_{g(0)}(x_0,1)$ for some $x_0\in M$, then for all $t\in[0, T)\cap(0,\ve]$, we  have \bee
|\Rm|(x_0,t)\leq \frac{\a}{t}. \eee

\end{thm}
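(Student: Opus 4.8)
The plan is to prove the pseudolocality statement by a contradiction-and-rescaling argument in the style of Perelman, adapted to the almost-Euclidean volume hypothesis in place of an isoperimetric hypothesis. Fix $n$ and $\alpha$. If no admissible $\ve$ exists, then there are a sequence $\ve_i\downarrow 0$ and a sequence of complete bounded-curvature Ricci flows $(M_i, g_i(t))$, $t\in[0,T_i)$, with base points $x_i$ satisfying $V_{g_i(0)}B_{g_i(0)}(x_i,1)\ge(1-\ve_i)\omega_n$ and $\Ric(g_i(0))\ge-\ve_i$ on $B_{g_i(0)}(x_i,1)$, yet with $|\Rm|(x_i,t_i)>\alpha/t_i$ for some $t_i\in(0,\ve_i]$. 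The goal is to extract a blow-up limit that must be flat $\mathbb{R}^n$, contradicting the curvature concentration.

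First I would run a point-picking argument: using a parabolic version of Perelman's point-selection lemma, replace $(x_i,t_i)$ by points $(\bar x_i,\bar t_i)$ at which $Q_i:=|\Rm|(\bar x_i,\bar t_i)$ is large and essentially maximal over a backward parabolic region of size comparable to $Q_i^{-1/2}$ in space and $Q_i^{-1}$ in time. Rescaling the flows by $Q_i$ and translating $\bar t_i$ to time $0$ then produces pointed flows with $|\Rm|\le 1+o(1)$ on larger and larger backward parabolic cylinders and with $|\Rm|(\bar x_i,0)=1$. The volume hypothesis, via Perelman's no-local-collapsing theorem (which the bounded-curvature and almost-Euclidean-volume assumptions supply), gives a uniform injectivity-radius lower bound at the marked points, so Hamilton's compactness theorem yields a pointed ancient limit Ricci flow $(M_\infty,g_\infty(t),x_\infty)$, $t\in(-\infty,0]$, which is complete with $|\Rm|\le 1$ and $|\Rm|(x_\infty,0)=1$.

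The heart of the argument is the rigidity step. The mechanism is monotonicity of Perelman's reduced volume $\widetilde{\mathcal V}$ based at $(x_i,0)$ (equivalently, a localized $\mathcal{W}$-entropy). The condition $V_{g_i(0)}B_{g_i(0)}(x_i,1)\ge(1-\ve_i)\omega_n$ together with the Ricci lower bound forces the reduced volume at small scales to converge to the Euclidean value as $i\to\infty$; by monotonicity along the flow the reduced volume of the limit is then constant and equal to that of the Gaussian on $\mathbb{R}^n$. The equality case of reduced-volume (or entropy) monotonicity forces $(M_\infty,g_\infty(t))$ to be a gradient shrinking soliton which, having maximal reduced volume, is the flat Gaussian soliton on $\mathbb{R}^n$. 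In particular $g_\infty(0)$ is flat, contradicting $|\Rm|(x_\infty,0)=1$.

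I expect the main obstacle to be step (a): propagating the volume and Ricci hypotheses, stated only on the unit ball at $t=0$, to a genuine lower bound on the reduced volume at the base point, using the local rather than global geometry. This is exactly the point where Tian-Wang replace Perelman's isoperimetric input by the almost-Euclidean volume condition, and it is the delicate localization of the entropy monotonicity under merely a Ricci lower bound on a fixed ball. A secondary point is (b), justifying the equality case, i.e. that constancy of the reduced volume forces a shrinking soliton and that the volume normalization pins it to flat $\mathbb{R}^n$; once (a) is established, the compactness and rigidity inputs in (b) are standard.
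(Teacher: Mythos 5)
The first thing to say is that the paper contains no proof of this statement: Theorem~\ref{pseu} is quoted verbatim as an external result of Tian--Wang \cite{Tian-Wang15} and is used as a black box, so there is no internal argument to compare yours against. Your sketch therefore has to stand on its own as a reconstruction of the Perelman/Tian--Wang pseudolocality argument. Its overall shape is right (contradiction, point-picking, parabolic rescaling, a monotone local quantity whose near-equality forces the blow-up limit to be flat $\R^n$, contradicting $|\Rm|(x_\infty,0)=1$), but two steps are genuinely problematic rather than merely technical.

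First, you invoke Perelman's no-local-collapsing theorem to obtain injectivity-radius lower bounds at the rescaled points $(\bar x_i,\bar t_i)$. The constant $\kappa$ in that theorem depends on the initial manifold (through an entropy or injectivity-radius bound for $(M_i,g_i(0))$ at a definite scale), and nothing in the hypotheses controls this uniformly in $i$: the flows are arbitrary complete bounded-curvature flows whose geometry is constrained only on a single unit ball at $t=0$. In Perelman's \S10.1 and in \cite{Tian-Wang15} the non-collapsing at the blow-up points is not imported from the global theorem; it is extracted from the same localized monotone quantity that drives the contradiction. Second, and more seriously, the step you yourself flag as the main obstacle --- converting the hypotheses on $B_{g(0)}(x_0,1)$ into a lower bound for the reduced volume (or localized entropy) based at $(\bar x_i,\bar t_i)$ --- is the entire content of the theorem. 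Point-picking controls the curvature only on a backward parabolic region of spatial size $Q_i^{-1/2}$ and temporal size $AQ_i^{-1}$ about $(\bar x_i,\bar t_i)$; to estimate the reduced volume at parameter $\tau=\bar t_i$ you must control $\mathcal{L}$-geodesics reaching all the way back to time $0$ through a region with no curvature bounds. This is precisely why Perelman instead runs the conjugate heat kernel with a cutoff forward to time $0$ and converts the entropy deficit into a violated logarithmic Sobolev (isoperimetric) inequality there, and why Tian--Wang must replace the isoperimetric input by the volume-plus-Ricci input at that stage. Until that localization is carried out, your proposal is an outline of the known strategy, not a proof. A cheaper route, consistent with the remark following Theorem~\ref{main}, would be to show that the almost-Euclidean volume condition together with $\Ric\geq-\ve$ yields an almost-Euclidean isoperimetric inequality on a slightly smaller ball (via Colding's volume convergence) and then quote Perelman's original pseudolocality; but either way the hard analytic content cannot be bypassed.
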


Then we can give a lower bound for the maximal existence time for Ricci flow by the above pseudolocality theorem.

\begin{lma}\label{maximal time}  Given any $n\in\mathbb{N}$ and $\alpha>0$, let $\ve(n,\alpha)>0$ be as in the above theorem and $(M, g(t), t\in[0,T))$ be a smooth complete Ricci flow with bounded curvature, where $T$ is the maximal existence time. Suppose  $V_{g(0)} B_{g(0)}(x,1)\geq(1-\ve)\omega_n$ and $\Ric (g(0))\geq -\ve$ on $B_{g(0)}(x,1)$ for all $x\in M$. Then $T\geq \ve$.

\end{lma}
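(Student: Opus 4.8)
The plan is to derive a uniform $\alpha/t$ curvature bound for the whole flow by applying the Tian--Wang pseudolocality theorem (Theorem \ref{pseu}) simultaneously at every point of $M$, and then to invoke the standard continuation criterion for complete bounded-curvature Ricci flow in order to exclude $T<\ve$.

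First I would observe that the hypotheses of Theorem \ref{pseu}, namely the almost-Euclidean volume condition $V_{g(0)}B_{g(0)}(x,1)\geq(1-\ve)\omega_n$ and the lower bound $\Ric(g(0))\geq-\ve$ on $B_{g(0)}(x,1)$, are assumed to hold for \emph{every} $x\in M$. Hence Theorem \ref{pseu} applies with $x_0=x$ for each such $x$, and produces
\bee
|\Rm|(x,t)\leq\frac{\alpha}{t}\qquad\text{for all }x\in M\ \text{and all }t\in[0,T)\cap(0,\ve].
\eee

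Next I would argue by contradiction, assuming $T<\ve$. Then $(0,T)\subset(0,\ve]$, so the displayed estimate yields $\sup_{x\in M}|\Rm|(x,t)\leq\alpha/t$ for every $t\in(0,T)$. Fixing any $t_0\in(0,T)$ gives $\sup_{M\times[t_0,T)}|\Rm|\leq\alpha/t_0<\infty$; combining this with the bounded curvature of $g(t)$ on the compact interval $[0,t_0]$ guaranteed by hypothesis, we obtain $\sup_{M\times[0,T)}|\Rm|<\infty$. In particular the curvature does not blow up as $t\to T^-$.

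Finally I would apply the extension criterion for complete Ricci flows with bounded curvature (as in \cite{Shi}): a flow whose curvature remains uniformly bounded up to a finite maximal time can be continued to a strictly larger time interval, contradicting the maximality of $T$. This forces $T\geq\ve$. The one point requiring care is that the pseudolocality bound degenerates like $\alpha/t$ as $t\to0$ and therefore gives no control at the initial time by itself; what rescues the argument is precisely that the putative singular time $T$ is bounded away from $0$, so on $[t_0,T)$ the quantity $\alpha/t$ stays finite and yields a genuine uniform curvature bound up to $T$. Beyond correctly matching the notion of maximal existence time with the bounded-curvature continuation theorem, I do not anticipate a substantial analytic obstacle, since the remainder is a soft application of the continuation principle.
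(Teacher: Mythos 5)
Your argument is correct and is essentially the paper's own proof: apply the Tian--Wang pseudolocality theorem at every point to get $|\Rm|(x,t)\leq\alpha/t$ whenever $T<\ve$, then contradict the blow-up characterization $\limsup_{t\to T^-}\sup_M|\Rm|=\infty$ of a finite maximal existence time. Your write-up merely makes explicit the two points the paper leaves implicit (the $\alpha/t$ bound degenerating at $t=0$ is harmless, and the extension criterion is what supplies the contradiction), which is fine.
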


\begin{proof} If $T=\infty$, then the result holds trivially. If $T<\infty$, then $\limsup\limits_{t\to T^-}\sup\limits_{M}|\Rm|=\infty$. Now we argue by contradiction. Suppose $T<\ve$, then by the above pseudolocality theorem, we have for all $x\in M$, $|\Rm|(x,t)\leq \frac{\a}{t}$. This leads a contradiction. 

\end{proof}

Now we can prove Theorem \ref{main}:

\textit{Proof of Theorem \ref{main}:} By rescaling the metric $g$ and applying Lemma \ref{conf}, for each $U_\rho$, there exists a Riemannian metric $h_\rho$ on it such that (i), (ii) and (iii) in Lemma \ref{conf} hold. By Shi's short-time existence of Ricci flow \cite{Shi89}, we obtain a smooth complete Ricci flow $h_\rho(t)$ with bounded curvature on $U_\rho$ and $h_\rho(0)=h_\rho$. By rescaling $h_\rho$ and applying Theorem \ref{pseu}, Lemma \ref{maximal time}, we see that for all $\rho$ large, $h_\rho(t)$ exists on $[0, T(n,\a,\Lambda,r_0)]$ and \bee\sup\limits_{x\in U_\rho}|\Rm(h_\rho(t))|(x)\leq \frac{\a}{t}.\eee Then one can use Chen-Simon's result (\cite{Chen09} and \cite{Simon12}) and modified Shi's estimate \cite{Lu-Tian11}  to obtain a complete Ricci flow on $M\times[0, T]$. To see the completeness, one can use Perelman's distance distortion estimate \cite{Perelman}.  This completes the proof of Theorem \ref{main}. \qed

Finally, let us show Corollary \ref{unif}:

\textit{Proof of Corollary \ref{unif}:} By Theorem \ref{main}, we obtain a smooth complete Ricci flow with curvature bounded by $\frac{\a}{t}$ in a short time. Then by \cite{Huang-Tam} or \cite{Lee-Tam17}, we see that $g(t)$ is also \K and has nonnegative bisectional curvature. By \cite{Chau-Tam}, to prove $M$ is biholomorphic to $\mathbf{C}^n$, it suffices to show $g(t)$ has maximum volume growth. By \cite{He16} (or by Corollary 6.2 in \cite{Simon} with a rescaling arguement), we see this is true. Therefore, we complete the proof of Corollary \ref{unif}. \qed

\end{document}